\newtheorem{thm}{Theorem}[section]
\newtheorem{lem}[thm]{Lemma}
\newtheorem{cor}[thm]{Corollary}
\newtheorem{prop}[thm]{Proposition}
\newtheorem{thmintro}{Theorem}
\theoremstyle{definition}
\newtheorem{rem}[thm]{Remark}
\newcommand{\eps}{\varepsilon}
\newcommand{\SL}{{\mathrm {SL}}}
\newcommand{\CU}{{\mathcal U}}
\newcommand{\C}{{\mathrm C}}
\def\geq{\geqslant}
\newcommand{\Dih}{\mathrm{Dih}}
\def\Z{{\mathbb Z}}
\def\N{{\mathbb N}}
\def\QQ{{\mathbb Q}}
\def\FF{{\mathbb F}}
\def\NN{{\mathrm N}}
\def\R{{\mathrm R}}
\def\pim{{\pi'_{\rm min}}}
\def\leq{\leqslant}
\def\geq{\geqslant}
\def\cont{\mathfrak{c}}
\begin{document}

\title{First-order recognisability in finite and pseudofinite groups} 

\author{Yves Cornulier and John S. Wilson} 

\address{Y. Cornulier: CNRS and Univ Lyon, Univ Claude Bernard Lyon 1, Institut Camille Jordan, 43 blvd du 11 novembre 1918, F-69622 Villeurbanne}
\address{J.S. Wilson: Christ's College, Cambridge CB2 3BU, Great Britain\\and Mathematisches Institut, Universit\"at Leipzig,
Germany}

\date{February 17, 2020}

\subjclass[2010]{Primary 03C60, 20A15; Secondary 03C13, 03C20, 20D10, 20D15, 20D20}


\begin{abstract}
It is known that there exists a first-order sentence that holds in a finite group if and only if the group is soluble. Here it is shown that the corresponding statements with `solubility' replaced by `nilpotence' and `perfectness', among others, are false.

These facts present difficulties for the study of pseudofinite groups. However, a very weak form of Frattini's theorem on the nilpotence of the Frattini subgroup of a finite group is proved for pseudofinite groups.
\end{abstract}
\maketitle 

\section{Introduction}
We are concerned with properties of groups that can, for finite groups, be expressed by a sentence in the first-order language of group theory. Here we view a group as a set endowed with a binary operation (group multiplication), a self-map (the inverse map) and a constant (the element $1$) satisfying the usual axioms. Recall that a sentence means a formula with no free variable.

For instance, for each natural number $n$, a finite group satisfies the first-order universal sentence $(\forall x) (x^n=1\to x=1)$ if and only if it has order coprime to $n$. Commutativity, or more generally nilpotence
of class at most $n$, can similarly be characterized by a universal sentence .  The first-order sentence $(\forall x)\,((x=1)\vee (\exists y)([x,x^y]\neq 1))$ characterizes groups with no non-trivial soluble normal subgroup.

More generally, we consider first-order formulae with free variables. Given a first-order formula with $n+1$ free variables $F(\underline x,y)=F(x_1,\dots,x_n,y)$, and given a group $G$ with an $n$-tuple $\underline g=(g_1,\dots,g_n)$, by $F(\underline g,G)$ we mean the subset $\{h\in G\mid F(\underline g,h)\}$. A subset of this form is called a definable subset; if $n=0$ we call it a parameter-free definable subset and we call the formula $F(g)$ parameter-free. 
For instance, the formula $(\forall y)(xy=yx)$ defines the centre of a group; more generally, for each $k$ there exists a parameter-free first-order formula defining the $k$th term of the central series in every group.

Any group property that can be characterized by a first-order sentence is stable under taking ultraproducts.  This makes it easy to show that many group properties, such as finiteness, simplicity, perfectness, solubility and nilpotence, are not characterizable by a first-order formula. Nevertheless, we can ask whether such properties can be characterized {\it among finite groups} by a first-order sentence, that is, whether there is a first-order sentence that is satisfied by a finite group $G$ if and only if $G$ has the property. 

It was shown in \cite{Wsol} that a finite group $G$ is soluble if and only if it satisfies the first-order sentence $\sigma_{56}$ expressing the fact that no non-trivial element $g$ is a product of 56 commutators $[x,y]$ with both of $x,y$ conjugate to $g$, and moreover in \cite{Wrad} that there is a parameter-free first-order formula $\rho(x)$\label{rho_def} such that the set $\rho(G)=\{x\mid \rho(x)\}$ is equal to the soluble radical $\R(G)$ of $G$.  We recall that the soluble radical, Fitting subgroup and Frattini subgroup of a finite group $G$ are respectively the largest soluble normal subgroup, the largest nilpotent normal subgroup and the intersection of all maximal subgroups.  

A finite group $G$ is called $\varpi$-group, for a set $\varpi$ of primes, if every prime divisor of $|G|$ is in $\varpi$. By $\varpi'$, we mean the complement of $\varpi$ in the set of primes. Thus, a finite group 
is a $\varpi'$-group if and only if its order is divisible by no prime in $\varpi$. For $\varpi$ finite, the 
$\varpi'$-groups are characterized, among finite groups, by the sentence $(\forall x) (x^n=1\to x=1)$ seen above, with $n$ the product of the primes in $\varpi$, and also by the sentence $(\forall x)(\exists y)(x=y^n)$. By contrast, we have assertion \ref{item_ifS} below.

\begin{thmintro}\label{thm_abel}~
\begin{enumerate}[label={\rm (\alph*)}]
\item\label{item_ifS} 
Let $\varpi$ be a set of primes. If there is a first-order sentence that holds for a finite cyclic group $C$ if and only if it is a $\varpi'$-group, then $\varpi$ is either finite or the set of all primes. 

In particular, there is no first-order sentence that holds for a cyclic group $C$ if and only if $|C|$ is a power of $2$.
\item\label{item_nil} There is no first-order sentence that holds for a finite group $G$ if and only if $G$ is nilpotent. 

Indeed, if $\varpi$ is a set of primes, then there is a sentence that characterizes the nilpotent groups among the
finite $\varpi$-groups if and only if $\varpi$ is finite. 
\item\label{item_Fit} There is no first-order formula $\chi(x)$ such that for every finite group $G$ the Fitting subgroup is equal to $\chi(G)$. 
\end{enumerate}
\end{thmintro}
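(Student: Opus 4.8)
The negative statements all run on the theorem of \L o\'s: an ultraproduct of finite groups along a non-principal ultrafilter satisfies precisely those first-order sentences holding in almost every factor, so to see that a class $\mathcal C$ of finite groups is not defined by a single sentence it suffices to produce finite $G_n\in\mathcal C$ and $H_n\notin\mathcal C$ with $\prod_{U}G_n\equiv\prod_{U}H_n$. Two reductions are immediate. Assertion \ref{item_Fit} follows from \ref{item_nil}, since a formula $\chi$ with $\chi(G)=F(G)$ for all finite $G$ would make $(\forall x)\chi(x)$ hold exactly in the nilpotent finite groups. The bare statement of \ref{item_nil} is the special case ``$\varpi=\{\text{all primes}\}$'' of its refinement, and it also follows from \ref{item_ifS}: from a group $C$ one interprets uniformly the group $C\rtimes C_2$ in which $C_2$ inverts $C$, and for cyclic $C$ this is nilpotent exactly when $C$ is a $2$-group, so a sentence recognising nilpotence would recognise ``$|C|$ a power of $2$'' among finite cyclic groups.

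For \ref{item_ifS}: if $\varpi$ is finite, $(\forall x)(x^n=1\to x=1)$ with $n=\prod_{p\in\varpi}p$ works; if $\varpi$ is all primes, $(\forall x)(x=1)$ works; so assume $\varpi$ infinite with $\varpi'\neq\varnothing$ and that a sentence $\sigma$ holds in a finite cyclic group iff it is a $\varpi'$-group. Fix $q\in\varpi'$ and distinct $p_1,p_2,\dots\in\varpi$, take a non-principal ultrafilter $U$ on $\N$, and set $D=\prod_{U}C_{q^k}$, $E=\prod_{U}C_{q^kp_k}\cong D\times V$ with $V=\prod_{U}C_{p_k}$ (ultraproducts commute with finite direct products). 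Each $C_{q^k}$ is a $\varpi'$-group, so $D\models\sigma$. As $p_k\to\infty$, for each fixed $m$ almost all $C_{p_k}$ satisfy $(\forall x)(x^m=1\to x=1)$ and $(\forall x)(\exists y)(y^m=x)$, so $V$ is a non-zero divisible torsion-free abelian group, i.e.\ a non-zero $\QQ$-vector space. Choosing generators $g_k$ of $C_{q^k}$, the element $h=(g_k^{q^{\lfloor k/2\rfloor}})_k$ of $D$ has infinite order and lies in every $mD$; since $D$ is $\aleph_1$-saturated (a countable ultraproduct), its first Ulm subgroup $\bigcap_{m}mD$ is divisible, so $D$ has a non-zero $\QQ$-vector space as a direct summand, and hence $D\equiv D\times V=E$ (all non-zero $\QQ$-vector spaces are elementarily equivalent, and $\oplus$ preserves elementary equivalence). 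Thus $E\models\sigma$, so some $C_{q^kp_k}\models\sigma$ --- a finite cyclic group that is not a $\varpi'$-group, contradiction. (The ``in particular'' is $\varpi=\{\text{all primes}\}\setminus\{2\}$.)

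For \ref{item_nil} with $\varpi$ finite one builds the sentence from Schmidt's theorem on minimal non-nilpotent groups: a finite group is nilpotent iff elements of coprime order commute (a minimal non-nilpotent $A\rtimes\langle b\rangle$ contains a non-commuting pair, a power of $b$ and an element of $A$, of coprime orders). So one wants $(\forall x)(\forall y)\bigl(\mathrm{cop}(x,y)\to xy=yx\bigr)$ with $\mathrm{cop}(x,y)$ a formula saying $\mathrm{ord}(x)$ and $\mathrm{ord}(y)$ are coprime. The crux --- and the role of finiteness of $\varpi$ --- is that such a $\mathrm{cop}$ exists among finite $\varpi$-groups: one must express first-order, uniformly, the finitely many conditions ``no prime of $\pi$ divides $\mathrm{ord}(x)$'' for $\pi\subseteq\varpi$, equivalently describe each $O_p(G)$, $p\in\varpi$, so that $F(G)$ is definable and nilpotence is $F(G)=G$. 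I expect this step to need a careful argument, possibly using the definability of the soluble radical from \cite{Wrad}, but to be elementary in nature.

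For \ref{item_nil} with $\varpi$ infinite --- the substantive half --- one seeks nilpotent $\varpi$-groups $N_n$ and non-nilpotent $\varpi$-groups $M_n$ with $\prod_{U}N_n\equiv\prod_{U}M_n$; for $2\in\varpi$ this follows from \ref{item_ifS} via the dihedral interpretation above, now inside the class of cyclic $\varpi$-groups. In general the construction is tightly constrained: it must fail for $\varpi$ finite, so $N_n$ and $M_n$ must involve unboundedly many primes of $\varpi$; and bounded nilpotency class, solvability, commutativity, the primes below any fixed bound, and the orders of commutators are first-order, so the $N_n$ must have nilpotency class $\to\infty$ and all such ``bounded'' invariants must agree in the limit. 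The guiding principle is that, by Schmidt, the \emph{only} first-order-visible symptom of non-nilpotence is a non-commuting pair of elements of coprime order, while the coprimality relation is --- by the mechanism of \ref{item_ifS} --- inexpressible once $\varpi$ is infinite. One would therefore try to let $M_n$ differ from $N_n$ only by making two of its Sylow subgroups, at primes of $\varpi$ growing with $n$, fail to commute: e.g.\ $N_n$ a direct product of a fixed-shape $p_1$-group of class $n$ with cyclic groups $C_{p_2},\dots,C_{p_n}$, and $M_n$ the same with one direct factor replaced by a fixed-point-free semidirect product of a bounded-order piece at $p_1$ with the component at the large prime $p_n$ (of bounded dimension $\mathrm{ord}_{p_1}(p_n)$). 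I expect the main obstacle to be exactly the verification that the two ultraproducts then agree elementarily --- matching every first-order invariant, including centres and orders of commutators --- so that finding a construction robust enough for this is where the real work lies.
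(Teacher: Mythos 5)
Your treatment of \ref{item_ifS}, of \ref{item_Fit}, and of the bare first sentence of \ref{item_nil} is essentially correct and close to the paper's: the paper likewise reduces everything to producing a non-zero divisible torsion-free direct summand of $\prod_\CU C_{q^k}$ (it builds a $\QQ^{(\cont)}$-summand by an explicit filtration and gets isomorphic, not merely elementarily equivalent, ultraproducts, but your saturation/Ulm-subgroup argument together with Feferman--Vaught is a perfectly adequate substitute). Your reduction of the bare statement of \ref{item_nil} to \ref{item_ifS} via a uniform interpretation of $\Dih(C)$ in $C$ is a legitimate shortcut the paper does not take (it instead proves directly that $\prod_\CU D_{2^n}\cong\prod_\CU D_{2^np_n}$); you should spell the interpretation out (e.g.\ domain $C^2$ modulo $(a,b)\sim(a,b')$ whenever $b=1\leftrightarrow b'=1$, valid for $|C|\geq 2$), but it does exist.

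The refined second sentence of \ref{item_nil} is where your proposal has genuine gaps, in both directions. For $\varpi$ finite you propose $(\forall x\forall y)(\mathrm{cop}(x,y)\to xy=yx)$ with $\mathrm{cop}$ expressing coprimality of orders, but no parameter-free formula can express this uniformly even among finite abelian $\{p,q\}$-groups: it would make the union of the two Sylow subgroups of $C_{p^n}\times C_{q^n}$ uniformly definable, and that set is not invariant under the automorphism of the ultraproduct that shears the $\QQ^{(\cont)}$-summand of $\prod_\CU C_{p^n}$ into that of $\prod_\CU C_{q^n}$. The paper's fix is to replace genuine coprimality by the surrogate $[x^p,y]=1\wedge[x,y^q]=1$: the sentence $\bigwedge_{p\neq q\in\varpi}F_{p,q}$, where $F_{p,q}$ is $(\forall x\forall y)\big(([x^p,y]=1\wedge[x,y^q]=1)\to[x,y]=1\big)$, holds in every finite nilpotent group (project to Sylow subgroups) and fails in every non-nilpotent finite $\varpi$-group, by applying the descent $x\mapsto x^p$, $y\mapsto y^q$ to a non-commuting pair of elements of coprime prime-power orders. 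For $\varpi$ infinite your argument covers only $2\in\varpi$; in general you offer a Frobenius-type construction whose key property (elementary equivalence of the two ultraproducts) you explicitly do not verify, and that verification is the entire difficulty. The paper sidesteps it: fix $q\in\varpi$ and $p_n\in\varpi$ tending to infinity and compare $G_n=C_{q^n}\wr C_q$ with $H_n=C_{p_nq^n}\wr C_q$; since taking ultraproducts commutes with $A\mapsto A\wr C_q$ for the fixed finite group $C_q$, the isomorphism $\prod_\CU G_n\cong\prod_\CU H_n$ reduces to the abelian absorption statement you already have, while $G_n$ is a nilpotent $\varpi$-group and $H_n$ a non-nilpotent one. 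Without some such device your proof of the refined statement is incomplete.
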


Assertions \ref{item_nil}, \ref{item_Fit} answer negatively questions that have been raised from time to time in the literature (e.g.\ in Question 3.0.11 in \cite{Macp}).   

It is convenient to use the following language.  We say that two sequences $(G_n)$, $(H_n)$ of groups are {\em asymptotically elementarily equivalent} (AEE) if for every first-order sentence $F$, there exists $m=m_F$ such that for every $n\geq m$, the group $G_n$ satisfies $F$ if and only if the group $H_n$ satisfies $F$. 

Let $\mathcal{C}$ be an isomorphism-closed class of groups.
Then, for a pair $((G_n),(H_n))$ of sequences of finite groups, we have the following assertions (see \S\ref{s_preli}):
\begin{itemize}
\item if for every non-principal ultrafilter $\CU$, the ultraproducts $\prod_\CU G_n$ and $\prod_\CU H_n$ are isomorphic, then $((G_n),(H_n))$ is AEE;
\item if  $((G_n),(H_n))$ is AEE and 
$G_n\in\mathcal{C}$, $H_n\notin\mathcal{C}$ for all $n$, then there is no formula $\Phi$ such that a finite group $G$ is in $\mathcal{C}$ if and only if it satisfies $\Phi$.
\end{itemize}

Some of the statements of Theorem \ref{thm_abel} will be obtained by exhibiting suitable AEE sequences of finite groups; we prove that the sequences are AEE by showing that they have isomorphic ultraproducts.
For Theorem \ref{thm_abel}\ref{item_ifS}, we use a pair of sequences $((G_n),(H_n))$ satisfying $H_n=G_n\times C_{p_n}$, where $(p_n)$ is a sequence of primes tending to infinity.  The ultraproduct of the second sequence is the direct product of the ultraproduct $U$ of the first sequence and the ultraproduct of the groups $C_{p_n}$. It is easy to see that the latter ultraproduct is isomorphic to $\QQ^{(\cont)}$, the unique torsion-free divisible abelian group up to isomorphism with the cardinality $\cont$ of the continuum.  If we can show that 
$U$ has a direct factor isomorphic to $\QQ^{(\cont)}$, it then follows that the ultraproducts of the sequences $(G_n)$ and $(G_n\times C_{p_n})$ are isomorphic.  For example, we show (Lemma \ref{summand_R}) that if  $(A_n)$ is a sequence of finite abelian groups of exponents tending to infinity then every ultraproduct of the sequence $(A_n)$ has a direct summand isomorphic to $\QQ^{(\cont)}$  and this leads easily to assertion \ref{item_ifS} of Theorem A.

To deduce (the first part of) assertion \ref{item_nil}, we use the fact that taking ultraproducts commutes with the operation $\Dih$ which maps an abelian group $A$ to the semidirect product $A\rtimes_\pm C_2$ in which the non-trivial element of $C_2$ acts by inversion.

Next we prove, with more work, the existence of a summand $\QQ^{(\cont)}$ in suitable ultraproducts of finite perfect groups $(G_n)$. As above, it follows that $(G_n)$ and $(G_n\times C_{p_n})$ are AEE sequences, yielding: 

\begin{thmintro}\label{thm_perfect}
There is no first-order sentence that holds for a finite group $G$ if and only if $G$ is perfect. 
\end{thmintro}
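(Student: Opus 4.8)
The plan is to produce, along the lines sketched before the statement, a sequence $(G_n)$ of finite \emph{perfect} groups such that for every non-principal ultrafilter $\CU$ the ultraproduct $U=\prod_\CU G_n$ has a direct factor isomorphic to $\QQ^{(\cont)}$. Granting this, set $H_n=G_n\times C_{p_n}$ for a sequence $(p_n)$ of primes tending to infinity. Then $\prod_\CU H_n\cong U\times\prod_\CU C_{p_n}\cong U\times\QQ^{(\cont)}$, and writing $U\cong U'\times\QQ^{(\cont)}$ we get $\prod_\CU H_n\cong U'\times\QQ^{(\cont)}\times\QQ^{(\cont)}\cong U'\times\QQ^{(\cont)}\cong U=\prod_\CU G_n$, using $\QQ^{(\cont)}\times\QQ^{(\cont)}\cong\QQ^{(\cont)}$. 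Hence the ultraproducts agree for every $\CU$, so by the first bulleted principle $((G_n),(H_n))$ is AEE; since every $G_n$ is perfect while every $H_n$ surjects onto $C_{p_n}$ and so is not, the second bulleted principle, applied to the class of perfect groups, yields the theorem.

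So everything reduces to the existence of such $(G_n)$, and the following constraints guide the construction. If $\QQ^{(\cont)}$ is a direct factor of $U$ then, being abelian, it lies in the centre of $U$ and maps injectively to $U^{ab}$; conversely, a central subgroup $Z\le U$ with $Z\cong\QQ^{(\cont)}$ that maps injectively to $U^{ab}$ is automatically a direct factor, because $\QQ^{(\cont)}$ is an injective $\Z$-module, so its image splits off $U^{ab}$ and the resulting composite $U\to U^{ab}\to Z$ is a retraction onto $Z$. Now $Z(U)=\prod_\CU Z(G_n)$, and by Lemma \ref{summand_R} this contains a summand isomorphic to $\QQ^{(\cont)}$ as soon as the exponents of the finite abelian groups $Z(G_n)$ tend to infinity. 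The remaining, essential point is that this summand must survive in $U^{ab}$: since each $G_n$ is perfect, an element of $Z(G_n)$ maps to $0$ in $U^{ab}$ unless it requires more and more commutators to express as $n$ grows. Concretely, it suffices to arrange that the commutator width of $G_n$ needed to reach the elements of $Z(G_n)$ of large order tends to infinity; then every infinite-order element of $\prod_\CU Z(G_n)$ avoids $[U,U]$, the torsion-free part of $\prod_\CU Z(G_n)$ injects into $U^{ab}$, and the argument above applies.

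Thus I would look for finite perfect groups $G_n$ sitting in central stem extensions
\[
1\longrightarrow Z_n\longrightarrow G_n\longrightarrow Q_n\longrightarrow 1,
\]
with $Q_n$ perfect, $Z_n$ abelian of exponent tending to infinity, the extension chosen so that $G_n$ is perfect, and with the commutator width needed to reach the elements of $Z_n$ unbounded. A semidirect-product construction is hopeless here: a perfect group admits no nontrivial action on a cyclic group, so any cyclic abelian part must be produced by a genuine central extension, which is precisely why the multipliers $M(Q_n)$ and their ``commutator complexity'' enter.

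I expect this last point to be the main obstacle. Exhibiting perfect $Q_n$ with cyclic Schur multipliers of unbounded order is easy (for instance $\mathrm{SL}_n(\FF_q)$ with $q\equiv 1\ (\mathrm{mod}\ n)$ has centre $C_n$), but such matrix groups have bounded commutator width, so their ultraproducts are perfect and carry no $\QQ^{(\cont)}$ factor; one genuinely needs a family whose central commutator width is unbounded. Both the construction of such a family and the verification of the width estimate — together with checking that the divisible part of $\prod_\CU Z_n$, and not merely the torsion accompanying it, lands isomorphically in $U^{ab}$ — is where the real work lies.
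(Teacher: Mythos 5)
Your reduction is sound and matches the paper's strategy: it suffices to produce finite perfect groups whose non-principal ultraproducts admit $\QQ^{(\cont)}$ as a direct factor, and your splitting argument (a central, divisible subgroup meeting the derived subgroup trivially is automatically a direct factor, by injectivity of $\QQ^{(\cont)}$ as a $\Z$-module) is correct — indeed slightly more economical than the paper's Lemma \ref{summand}, which produces an invariant complement via Maschke's theorem. You have also correctly diagnosed the crux: one needs finite perfect groups containing central subgroups, of exponent tending to infinity, none of whose non-trivial elements is a product of boundedly many commutators. But that is precisely where your proposal stops, and it is the substance of the paper's proof of Theorem \ref{thm_perfect}; as written, your argument is incomplete.

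For comparison, the paper's construction is explicit and does not go through Schur multipliers of matrix groups (which, as you observe, have bounded commutator width and so cannot work). Take $V_n$ to be $n$ copies of the natural $\SL_2(q)$-module, form the class-$2$ exponent-$p$ group $E_n$ on $V_n\times\bigwedge^2V_n$, and quotient by the non-trivial $\SL_2(q)$-isotypic part of $\bigwedge^2V_n$; this leaves a trivial module $Z_n$ of $\FF_q$-dimension $n(n+1)/2$, equal to the derived subgroup of the resulting group $G_n$. For $q\equiv\pm1\pmod{10}$ one forms $H_n=G_n\rtimes B$ with $B\leq\SL_2(q)$ the binary icosahedral group; $H_n$ is perfect because $V_n\rtimes B$ is perfect and $Z_n\leq[G_n,G_n]$. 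The width estimate (Lemma \ref{comlength}) is a counting argument: since $Z_n$ is central, the commutator map factors through $(H_n/G_n')^2$, so $H_n$ has at most $q^{2k(2n+3)}$ products of $k$ commutators, whereas $G_n'$ contains more than $q^{n(n+1)/2-1}$ lines meeting pairwise trivially; hence for $n\geq 8k+2$ some line contains no non-trivial product of $k$ commutators. The ultraproduct of these lines, over primes tending to infinity, is the required central, torsion-free, divisible subgroup of cardinality $\cont$ avoiding $[U,U]$. Note also that your phrasing ``elements of $Z(G_n)$ of large order require many commutators'' is too loose to conclude: to get $\prod_\CU L_n\cap[U,U]=1$ from \L os's theorem you must exhibit specific subgroups $L_n\leq Z(G_n)$ all of whose non-trivial elements avoid being products of $c_n$ commutators with $c_n\to\infty$, which is exactly what the counting lemma delivers.
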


This strengthens the assertion (cf.\  \cite[Lemma 2.1.10]{holt}) that there exist finite perfect groups of arbitrarily large commutator widths. The latter result is {\em a priori} weaker, since it only guarantees that every first-order sentence satisfied by all finite perfect groups is also satisfied by some non-perfect group $H$; Theorem \ref{thm_perfect} asserts that $H$ can be chosen to be finite.

Felgner established in \cite{felgner} that among finite groups, the non-abelian simple groups can be characterized by a first-order sentence.  Moreover the class of finite direct products of non-abelian simple groups can also be characterized by a single sentence, as follows easily from \cite{Wcomp} (see Proposition \ref{form_produ}).  Among such direct products, for each given $n\geq 1$ the property of being a direct product of at least $n$ simple groups is obviously characterizable by the formula
\[(\exists x_1\dots\exists x_n)(\forall y)\Big(\bigwedge_i x_1\neq 1\wedge \bigwedge_{1\leq i<j\leq n} [x_i,yx_jy^{-1}]=1\Big).\]
However, among these direct products, direct powers of simple groups cannot be recognized:

\begin{thmintro}\label{thm_power}
There is no first-order sentence that holds for a finite group $G$ that holds if and only if it is a direct power $($resp.\ direct square$)$ of a non-abelian simple group.
\end{thmintro}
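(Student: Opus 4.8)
The plan is to apply the second criterion recalled above (``AEE together with $G_n\in\mathcal C$, $H_n\notin\mathcal C$ forbids a recognising formula'') to two classes at once, using a single pair of sequences: $\mathcal C_{\mathrm{pow}}$, the class of direct powers of non-abelian simple groups, and $\mathcal C_{\mathrm{sq}}$, the class of direct squares of non-abelian simple groups. For each $n$ write $\equiv_n$ for the equivalence relation on finite groups given by agreement on all sentences of quantifier rank at most $n$. Since the language of groups is finite, there are only finitely many sentences of quantifier rank $\le n$ up to logical equivalence, so $\equiv_n$ has finitely many classes; and there are infinitely many pairwise non-isomorphic finite non-abelian simple groups (for instance the groups $\PSL_2(p)$, so that no classification is needed). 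Hence, for each $n$, the pigeonhole principle provides two \emph{non-isomorphic} finite non-abelian simple groups $S_n\not\cong S_n'$ with $S_n\equiv_n S_n'$, and I set $G_n=S_n\times S_n$ and $H_n=S_n\times S_n'$.

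Then $G_n$ is a direct square, hence a direct power, of a non-abelian simple group, so $G_n\in\mathcal C_{\mathrm{pow}}\cap\mathcal C_{\mathrm{sq}}$ for every $n$. On the other hand $H_n$ has exactly two minimal normal subgroups, namely $S_n\times 1$ and $1\times S_n'$, and they are non-isomorphic; but for a non-abelian simple group $U$ and any $k\ge 1$ every minimal normal subgroup of $U^k$ is isomorphic to $U$, so $H_n$ is not isomorphic to any such $U^k$. Thus $H_n\notin\mathcal C_{\mathrm{pow}}$, and in particular $H_n\notin\mathcal C_{\mathrm{sq}}$, for every $n$. It therefore remains only to check that $((G_n),(H_n))$ is AEE, which I would do directly rather than through isomorphic ultraproducts.

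The key ingredient is a Feferman--Vaught-type fact: if $A\equiv_k A'$ and $B\equiv_k B'$ then $A\times B\equiv_k A'\times B'$. I would prove this via the Ehrenfeucht--Fra\"iss\'e game characterisation of $\equiv_k$: Duplicator plays the $k$-round game on $(A\times B,\,A'\times B')$ coordinatewise, responding to a move $(a,b)$ by $(a',b')$ where $a\mapsto a'$ and $b\mapsto b'$ are Duplicator's responses in the winning strategies for the games on $(A,A')$ and $(B,B')$; the correspondence produced on the played tuples is a partial isomorphism because an equation between group words holds in $A\times B$ if and only if it holds in each coordinate. (Alternatively one quotes the Feferman--Vaught theorem.) Applying this with $A=A'=B=S_n$, $B'=S_n'$ and $k=n$ yields $G_n\equiv_n H_n$; hence for any sentence $\varphi$, of quantifier rank $r$ say, one has $G_n\models\varphi\iff H_n\models\varphi$ for all $n\ge r$, so $((G_n),(H_n))$ is AEE, and Theorem~\ref{thm_power} follows. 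In contrast to Theorems~\ref{thm_abel} and~\ref{thm_perfect}, no structural analysis of ultraproducts is needed here; the only genuinely delicate point is controlling the quantifier rank when passing to a direct product, which is exactly what the coordinatewise game (or the Feferman--Vaught theorem) supplies. If one prefers the ultraproduct formulation, the choice of the $S_n, S_n'$ gives $\prod_{\CU}S_n\equiv\prod_{\CU}S_n'$ for every non-principal ultrafilter $\CU$, hence $\prod_{\CU}G_n=(\prod_{\CU}S_n)^2\equiv(\prod_{\CU}S_n)\times(\prod_{\CU}S_n')=\prod_{\CU}H_n$, and elementary equivalence of all ultraproducts is equivalent to AEE.
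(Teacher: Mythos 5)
Your overall strategy is genuinely different from the paper's and, once repaired, arguably cleaner. The paper extracts a single elementarily convergent sequence $(S_n)$ of simple groups with $|S_n|<|S_{n+1}|$ and shows that $(S_n^2)$ and $(S_n\times S_{n+1})$ have isomorphic ultraproducts --- but only under CH, which forces a detour through Shoenfield's absoluteness theorem to remove that hypothesis. Your replacement of the ultraproduct isomorphism by a Feferman--Vaught-type preservation statement for finite direct products dispenses with both CH and absoluteness, and in fact the full Feferman--Vaught theorem (for each sentence $\varphi$ there is a finite set $\Sigma_\varphi$ of sentences such that the truth of $\varphi$ in $A\times B$ is determined by the truth values of $\Sigma_\varphi$ in $A$ and in $B$) would prove the paper's Lemma \ref{lem_nplusone} directly, with no set theory at all. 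Your closing ultraproduct reformulation is also fine, since $A\equiv A'$ and $B\equiv B'$ imply $A\times B\equiv A'\times B'$ by the same theorem.

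There is, however, a concrete gap in the step that produces the pairs $S_n\not\cong S_n'$. In the language of groups \emph{with function symbols} it is false that there are only finitely many sentences of quantifier rank at most $n$ up to equivalence: the sentences $(\exists x)(x\neq 1\wedge x^k=1)$ for $k=1,2,\dots$ all have quantifier rank $1$, and by Cauchy's theorem their truth values in a finite group determine the set of prime divisors of its order, which already separates the groups $\PSL_2(p)$ into infinitely many $\equiv_1$-classes. So the pigeonhole fails as stated; for the same reason the Ehrenfeucht--Fra\"iss\'e game with atomic-formula-preserving partial maps is too fine (Duplicator already loses the one-round game between $C_p$ and $C_q$ for $p\neq q$, since Spoiler plays an element of order $p$). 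Both problems disappear if you first pass to the relational encoding of groups (replace multiplication by its graph), where $\equiv_n$ does have finitely many classes, the game characterisation holds, and every group sentence is equivalent to a relational sentence of some fixed rank; alternatively, stratify by agreement on the first $n$ sentences of a fixed enumeration $(f_i)$ and choose $S_n\not\cong S_n'$ agreeing on the finite set $\bigcup_{i\leq n}\Sigma_{f_i}$ of Feferman--Vaught reduction sentences. With either repair the rest of your argument --- including the identification of $S_n\times S_n'$ as a non-power via its two non-isomorphic minimal normal subgroups --- goes through.
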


The proof again uses ultraproducts and the fact that taking ultraproducts commutes with taking direct products of two groups. Perhaps surprisingly, in our proof we use the continuum hypothesis (CH): this  ensures that an ultraproduct of finite groups is determined up to isomorphism by its elementary theory. Then Shoenfield's absoluteness theorem is used to eliminate CH from the assumptions.   

Given a class $\mathcal{C}$ of groups, following \cite{OHP} we say that a group is pseudo-$\mathcal{C}$ if it satisfies all first-order sentences satisfied by all groups in the class $\mathcal{C}$. Basic arguments (cf.\ Proposition \ref{pseudo_C}) show the equivalence the following properties:  
\begin{enumerate}[label={\rm (\roman*)}]
\item $G$ is pseudo-$\mathcal{C}$;
\item $G$ is elementarily equivalent to some ultraproduct of a sequence of groups in $\mathcal{C}$;
\item there exists a sequence $(G_n)$ of groups in $\mathcal{C}$ that elementarily converges to $G$ (that is, the the sequence $(G,G_n)$ is an AEE pair in the above sense). 
\end{enumerate}

In particular, a group is called {\em pseudofinite} if it satisfies all first-order sentences in the language of group theory that hold in all finite groups.  (Some authors require pseudofinite groups to be infinite;  here we find it preferable to include finite groups.)  
 
Some results about finite groups extend directly to pseudofinite groups.  For example, a group is pseudo-(finite soluble) if and only if it is pseudofinite and satisfies the sentence $\sigma_{56}$ mentioned above. Since for arbitrary groups $\neg\sigma_{56}$ is clearly an obstruction to solublility, it follows that the pseudo-(finite soluble) groups are just the groups that are pseudofinite and pseudosoluble.

However the negative assertions of Theorem \ref{thm_abel} present obstacles the study of pseudofinite groups. Indeed, the notion of being pseudo-(finite nilpotent) is worse-behaved than its soluble analogue. 
Theorem \ref{thm_abel}\ref{item_nil} shows that a pseudo-(finite nilpotent) group can also be pseudo-(finite non-nilpotent). That is, among pseudofinite groups, pseudo-(finite nilpotent) groups cannot be characterized by a single formula. We note that pseudonilpotent groups satisfy various properties not true in general soluble finite groups, such as $$(\forall x\forall y)\, (x^2=1\wedge y^3=1\to xy=yx).$$ Elaborating on the construction  of Theorem \ref{thm_abel}\ref{item_nil}, we obtain the following pathology: 

\begin{thmintro}\label{psfn_subgroups}
There exists a pseudofinite group $G$ having $($normal\/$)$ definable subgroups $H\leq L$ with $L$ pseudo-(finite nilpotent) but not $H$. 
\end{thmintro}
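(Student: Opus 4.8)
The plan is to exploit a first-order sentence that is satisfied by every finite nilpotent group but which passes neither to subgroups nor to quotients. A convenient choice is
\[
\psi\;:=\;(\exists z)\big(z\neq 1\wedge(\forall x)(xz=zx)\big),
\]
asserting that the centre is non-trivial. Since the centre is uniformly definable, $Z\big(\prod_{\CU}N_n\big)=\prod_{\CU}Z(N_n)$, so any group elementarily equivalent to an ultraproduct of finite nilpotent groups — in particular any pseudo-(finite nilpotent) group — has non-trivial centre. Hence \emph{a non-trivial centreless group is never pseudo-(finite nilpotent)}, and to prove the theorem it suffices to produce a pseudofinite group $G$ with definable normal subgroups $H\leq L$ such that $L$ is pseudo-(finite nilpotent) but $H$ is non-trivial and centreless.

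I would construct $G$, $L$, $H$ as ultraproducts over a fixed non-principal ultrafilter $\CU$. The subgroup $H$ should be $H=\prod_{\CU}H_n$ with $H_n$ finite, centreless, of order tending to infinity — for instance $H_n=\Dih(C_{q_n})$ for odd primes $q_n\to\infty$; one picks these because, although $H_n$ is non-nilpotent, its only ``bad'' prime $q_n$ escapes to infinity, so $H_n$ satisfies each fixed universal law of finite nilpotent groups once $n$ is large. The problem is then to find finite groups $L_n\supseteq H_n$ with $H_n\unlhd L_n$ such that the sequence $(L_n)$ is asymptotically elementarily equivalent to a sequence $(M_n)$ of finite nilpotent groups — best of all by arranging that for every non-principal ultrafilter the ultraproducts of $(L_n)$ and $(M_n)$ are isomorphic. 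Granting this, $L:=\prod_{\CU}L_n$ is pseudo-(finite nilpotent) by the criterion recalled in the introduction, $H\leq L$ is centreless and non-trivial (hence not pseudo-(finite nilpotent)), and $G:=L$ is pseudofinite; one also needs $H_n$ to be \emph{uniformly} definable inside $L_n$, so that $H$ and $L$ are definable normal subgroups of $G$.

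The heart of the matter, and the main obstacle, is to reconcile ``$H_n\unlhd L_n$ with $H_n$ centreless'' with ``$(L_n)$ asymptotically nilpotent''. Two structural constraints make this delicate. First, if $H_n$ is centreless and normal in $L_n$, an easy induction on $k$ gives $H_n\cap Z_k(L_n)=1$ for all $k$; consequently $H_n$ embeds into every quotient $L_n/Z_k(L_n)$, so if the upper central series of $L_n$ reached its hypercentre at a step bounded independently of $n$, then $\prod_{\CU}L_n/Z_c\big(\prod_{\CU}L_n\big)=\prod_{\CU}\big(L_n/Z_c(L_n)\big)$ would be centreless, contradicting that $L$ is pseudo-(finite nilpotent); so $L_n$ must carry a nilpotent part of nilpotency class tending to infinity. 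Second, $H_n$ must be genuinely \emph{buried} inside $L_n$: a mere direct product $L_n=H_n\times P_n$ is not enough, because there the nilpotent residual $\gamma_\infty(L_n)=\gamma_\infty(H_n)=C_{q_n}$ together with the conjugation action typically exhibit $\Dih(C_{q_n})$ as a uniformly definable section of $L$, so that $L$ would interpret the non-pseudo-(finite nilpotent) group $\prod_{\CU}H_n$. One is thus forced to intertwine $H_n$ with a nilpotent ``padding'' of unbounded class — via a semidirect product or an iterated extension in which the odd part of $H_n$ meets the hypercentre — and the crucial verification, that the resulting sequence $(L_n)$ has the same ultraproducts as a sequence of finite nilpotent groups, is then carried out exactly as in the proof of Theorem \ref{thm_abel}\ref{item_nil}: the continuum-sized torsion-free divisible summand $\QQ^{(\cont)}$ provided by Lemma \ref{summand_R} is used to absorb the odd cyclic groups $C_{q_n}$, while the $2$-group padding is reproduced on the nilpotent side. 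Making this absorption compatible with the extension data defining $L_n$ is the technical crux of the argument.
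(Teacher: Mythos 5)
Your reduction is sound as far as it goes: a non-trivial group with trivial centre cannot be pseudo-(finite nilpotent), this is exactly the criterion the paper uses, and $H_n=\Dih(C_{q_n})$ with $q_n\to\infty$ is exactly the paper's choice of $H$. The fatal problem is the architecture you then impose: you set $G:=L$ and require $H_n$ to be uniformly definable \emph{inside} $L_n$, so that $H$ becomes a definable subgroup of $L$ itself. No such $H$ can exist. Since the class of finite nilpotent groups is closed under taking subgroups, every definable subgroup of a pseudo-(finite nilpotent) group is again pseudo-(finite nilpotent): for any sentence $F$ true in all finite nilpotent groups and any formula $\varphi(\underline y,x)$, the sentence ``for all $\underline y$, if $\varphi(\underline y,\cdot)$ defines a subgroup then $F$ relativised to that subgroup holds'' is true in every finite nilpotent group and hence in $L$. (The paper records precisely this closure property in the introduction, immediately after the statement of the theorem.) So a definable subgroup of your $L$ can never be non-trivial and centreless, and the ``technical crux'' you defer is not merely difficult but impossible. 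The whole content of the theorem is that $H$ is definable in an ambient group $G$ strictly larger than $L$, using a parameter lying outside $L$, while $H$ is \emph{not} definable in $L$.

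For comparison, the paper takes $G=\prod_\CU\big(\Dih(C_{2^n})\times\Dih(C_{p_n})\big)$, with $a,b$ the images of reflections in the two factors, and sets $L=[G,G]\cup ab[G,G]$, a definable normal subgroup of index $4$. This $L$ is isomorphic to the ultraproduct of the single generalized dihedral groups $(C_{2^{n-1}}\times C_{p_n})\rtimes_\pm\langle a_nb_n\rangle$, which by the $\QQ^{(\cont)}$-absorption of Lemma \ref{summand_R} (as in the proof of Theorem \ref{thm_abel}\ref{item_nil}) is isomorphic to an ultraproduct of dihedral $2$-groups; hence $L$ is pseudo-(finite nilpotent). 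Then $H=L\cap\C_G(a)$, definable in $G$ only because the parameter $a$ lies in $G\smallsetminus L$, and $H$ is essentially the ultraproduct of the groups $\Dih(C_{p_n})$, hence non-trivial with trivial centre. No nilpotent padding of unbounded class and no intertwined extension is involved: the correct resolution of the obstruction you identify with direct products is not to bury $H_n$ more deeply inside $L_n$, but to arrange that $L$ cannot define $H$ at all, which your choice $G=L$ rules out from the start.
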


In our example, $H\neq1$ and $H$ has trivial centre so is not pseudonilpotent;  $L$ has index $8$ and is elementarily equivalent to an ultraproduct of dihedral 2-groups; $H$ is definable in $G$ but not in $L$.   Indeed, a definable subgroup of a pseudo-(finite nilpotent) group is pseudo-(finite nilpotent), by a routine argument: more generally, if a class $\mathcal{C}$ of groups is closed under taking subgroups, then being pseudo-$\mathcal{C}$ is closed under taking definable subgroups.

Clearly a pseudo-(finite nilpotent) group is both pseudofinite and pseudonilpotent. We do not know whether the converse implication holds. 

A definable subgroup of a pseudofinite group is also pseudofinite. Moreover, in a pseudofinite group $G$ with a pseudosoluble definable subgroup $H$, all definable subgroups of $G$ contained in $H$ are pseudosoluble (although they can fail to be definable in $H$).

Let $G$ be a pseudofinite group. Define $\R(G)=\{g\in G\mid \rho(g)\}$, where $\rho$ is defined at the beginning of this introduction.  It is parameter-free definable, and contains all pseudosoluble definable normal subgroups of $G$.
Obviously $\R(G)=1$ if and only if $G$ has no non-trivial abelian normal subgroup. Furthermore, in general the quotient $G/\R(G)$ is pseudofinite and $\R(G/\R(G))=1$ (see for example \cite[Propositions 2.16, 2.17]{OHP}).  

By \cite[Theorem 1]{Wcomp}, there are first-order formulae $\pi(h,x)$, $\pim(h)$ such that the non-abelian minimal normal subgroups of a finite group $G$ are precisely the sets $\pi(h,G)$  for elements
$h\in G$ satisfying $\pim(h)$.  From this, it was shown in \cite{Wcomp} that every nontrivial pseudofinite group with $\R(G)=\{1\}$ has a minimal  (non-trivial) definable normal subgroup, and indeed that every nontrivial definable normal subgroup contains a minimal one.

A basic result of finite group theory is that in every finite group, every non-nilpotent normal subgroup admits a proper supplement.
 (A proper supplement to $K\triangleleft G$ is a proper subgroup $H<G$ such that $G=HK$.) This is essentially equivalent to the fact that the Frattini subgroup is nilpotent in every finite group. It is not clear at this point to which extent this can be generalized to the setting of pseudofinite groups. In this direction, we obtain the following partial generalization.

\begin{thmintro}\label{thm_defina}
 Let $G$ be a pseudofinite group and $K$ a normal definable subgroup. If $K$ is not pseudosoluble, then it admits a proper definable supplement in $G$.

 Consequently, every definable subgroup of $G$ with no proper supplement is contained in~$\R(G)$. 
\end{thmintro}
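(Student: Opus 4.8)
The plan is to rephrase the conclusion as one first-order sentence holding in all finite groups and then invoke pseudofiniteness of $G$. First I would dispose of the hypothesis: a definable subgroup of a pseudofinite group is pseudofinite, so $K$ is pseudofinite, whence $K$ is pseudosoluble if and only if it is pseudo-(finite soluble), i.e.\ if and only if $K$ satisfies the relativisation of $\sigma_{56}$. Fixing a formula $\phi(\underline a,x)$ with $K=\phi(\underline a,G)$, the hypothesis on $K$ thus becomes the first-order assertion, in the parameters $\underline a$, that $\phi(\underline a,\cdot)$ is a normal subgroup for which the relativisation of $\sigma_{56}$ fails. Next I would fix a formula $\tilde\Theta(h,x,y)$ --- built from the parameter-free formula $\rho$ of \cite{Wrad} (with $\rho(G)=\R(G)$) and the formula $\pi(h,x)$ of \cite{Wcomp} (for which the non-abelian minimal normal subgroups of a finite group are exactly the subgroups $\pi(h,\cdot)$ for suitable $h$) --- designed so that, writing $\bar G=G/\R(G)$ and $\bar h,\bar x$ for the images of $h,x$, the set $\tilde\Theta(h,x,G)$ is the full preimage in $G$ of $\NN_{\bar G}\!\big(\C_{\pi(\bar h,\bar G)}(\bar x)\big)$; this is always a subgroup containing $\R(G)$, depending on $h,x$ only through $\bar G$. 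The sentence to prove, call it $(\star)$, then reads: \emph{if $\phi(\underline a,\cdot)$ is a normal subgroup failing $\sigma_{56}$, then for some $h,x$ the subgroup $\tilde\Theta(h,x,\cdot)$ is proper and supplements $\phi(\underline a,\cdot)$}. Granting $(\star)$ in every finite group, pseudofiniteness carries it to $G$; applied to $K$ it yields a proper definable supplement.

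So it remains to verify $(\star)$ in an arbitrary finite group $G'$, and here lies the one step I expect to be the main obstacle. Let $K'=\phi(\underline a,G')$ be a normal subgroup failing $\sigma_{56}$; by \cite{Wsol} it is non-soluble, so its image $\bar K'$ in $\bar G'=G'/\R(G')$, a group with trivial soluble radical, is a non-trivial normal subgroup. Then $\bar K'$ contains a minimal normal subgroup $\bar L'$ of $\bar G'$, which is non-abelian, so $\bar L'=\pi(\bar h',\bar G')$ for some $\bar h'$ and $\bar L'=S_1\times\cdots\times S_k$ with each $S_j$ isomorphic to a fixed non-abelian finite simple group $S$. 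I must now produce a \emph{proper supplement to $\bar L'$ in $\bar G'$ that is definable using only boundedly many parameters}. A Frattini argument supplies one, namely $\NN_{\bar G'}(P)$ for $P$ a Sylow $p$-subgroup of $\bar L'$, but Sylow subgroups are not uniformly definable. The device is to choose $p$ so that $p$ divides $|S|$ but $p^{2}$ does not --- such a prime exists for every non-abelian finite simple group $S$ (a consequence of the classification; Bertrand's postulate handles the alternating groups and Zsigmondy's theorem the groups of Lie type) --- so that each Sylow $p$-subgroup of $S_j$ has order $p$. Picking $y_j\in S_j$ of order $p$, putting $\bar x'=(y_1,\dots,y_k)$ and $P=\langle y_1\rangle\times\cdots\times\langle y_k\rangle\in\mathrm{Syl}_p(\bar L')$, one has $\C_{\bar L'}(\bar x')=\prod_j\C_{S_j}(y_j)=\C_{\bar L'}(P)$, and since each $\langle y_j\rangle$ is a \emph{normal} Sylow $p$-subgroup of $\C_{S_j}(y_j)$, $P$ is the largest normal $p$-subgroup of $\C_{\bar L'}(\bar x')$; hence $\NN_{\bar G'}(P)=\NN_{\bar G'}\big(\C_{\bar L'}(\bar x')\big)$ --- a subgroup defined from the single extra element $\bar x'$, lifted arbitrarily to $G'$.

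The remaining checks are routine. The Frattini argument gives $\bar G'=\bar L'\,\NN_{\bar G'}(P)$, and $\NN_{\bar G'}(P)\neq\bar G'$ because $P$ is not normal in $\bar L'$ (each $\langle y_j\rangle$ is a proper non-trivial subgroup of $S_j$); as $\bar L'\le\bar K'$ this upgrades to $\bar G'=\bar K'\,\NN_{\bar G'}(P)$, and pulling back --- the preimage $H'$ of $\NN_{\bar G'}(P)$ containing $\R(G')$ --- gives $G'=K'H'$ with $H'$ proper. This establishes $(\star)$, hence the first assertion. For the last assertion one argues by contraposition: a definable normal subgroup with no proper supplement has no proper definable supplement, so by the first assertion it is pseudosoluble, and is therefore contained in $\R(G)$, since $\rho(G)=\R(G)$ contains every pseudosoluble definable normal subgroup of $G$.
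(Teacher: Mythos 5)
Your overall architecture coincides with the paper's: encode everything in a single first-order sentence valid in all finite groups by exhibiting, in any finite $F$ with a normal non-soluble $K$, a proper supplement that is uniformly definable from boundedly many parameters, the supplement arising from the Frattini argument applied to a one-generator Sylow $p$-subgroup $P$ of a non-abelian minimal normal subgroup $\bar L=\prod_j S_j$ of the quotient by the radical, lying inside the image of $K$. Within that framework your definability device is genuinely different and a little slicker: you recover $\NN(P)$ as $\NN_{\bar G}\big(\C_{\bar L}(\bar x)\big)$ by observing that $P=O_p\big(\C_{\bar L}(\bar x)\big)$ is characteristic there. Because the centralizer is taken inside $\bar L$ itself, elements permuting the simple factors cause no trouble, so you avoid the paper's auxiliary subgroup $\bar T$ (the kernel of the action on the factors) and the two extra parameters $\bar d_1,\bar d_2$ with trivial centralizer in $\bar S_1$ that Lemma \ref{lem_frattini}\ref{item_d1d2} introduces to define it; the price is that you must relativize the formula $\pi(h,x)$ of \cite{Wcomp} to the interpretable quotient $G/\rho(G)$, which is routine.

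There is, however, one genuine gap. Your choice of $p$ rests on the claim that every non-abelian finite simple group $S$ has a prime dividing $|S|$ to the first power, with Zsigmondy's theorem invoked for the groups of Lie type. Zsigmondy produces a \emph{primitive} prime divisor $\ell$ of $q^n-1$, not one of multiplicity one ($\ell^2$ may well divide $q^n-1$, as for Wieferich-type primes), so the claim is not established as stated. Fortunately you do not need it: what your argument actually uses is that $\langle y_j\rangle$ is a Sylow $p$-subgroup of $S_j$ generated by a single element, whence $\langle y_j\rangle=O_p\big(\C_{S_j}(y_j)\big)$; for this it suffices that $S_j$ have a non-trivial \emph{cyclic} Sylow $p$-subgroup, which is exactly what the paper quotes from \cite[Theorem 4.9]{KLST}, and every subsequent step of your proof goes through verbatim with $\langle y_j\rangle$ cyclic of order $p^m$. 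A smaller point: the final assertion of the theorem concerns arbitrary definable subgroups with no proper supplement, whereas your closing contrapositive only treats the normal ones.
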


We do not know whether it is true that if $G$ is pseudofinite and $N$ is a normal definable subgroup that is not pseudo-(finite nilpotent)\footnote{$N$ not pseudo-(finite nilpotent) {\it inside $G$} would be a more natural assumption; see Remark \ref{nilpotent_inside}.}, then $N$ admits a proper definable supplement. For instance, if $N$ has two non-commuting elements of finite coprime order, does it have a proper definable supplement?

Notice that there is no first-order formula $f(x)$ defining the Frattini subgroup in every finite group.
If so, the first-order sentence $(\forall x)(f(x)\to x=1)$ would then characterize the finite groups with trivial Frattini subgroup, but no such sentence exists. Indeed, take a sequence of primes $(p_n)$ tending to infinity. Then $C_{p_n}$ has a trivial Frattini subgroup, but not $C_{p_n^2}$, but $(C_{p_n})$ and $(C_{p_n^2})$ are AEE sequences: indeed all non-principal ultraproducts of these families of groups are isomorphic to~$\QQ^{(\cont)}$.

\bigskip

\noindent {\bf Acknowledgements.}  The work reported here was begun at the conference 'Groups and their Actions' in Gliwice, Poland, in September 2019, and both authors are grateful to the local organizers for providing ideal circumstances for mathematical discussions. The first-named author thanks Todor Tsankov for his kind explanation of Shoenfield's absoluteness theorem. 
 
\section{Preliminaries}\label{s_preli}

The results of this section are classical and valid in a wider setting, but stated in a group-theoretic form fashioned to our needs.
We shall consider ultraproducts of finite groups: we recall that the ultraproduct $\prod_\CU G_i$ of groups $G_i$ with $i\in \N$ with respect to a ultrafilter $\CU$ on $\N$ is the quotient of the unrestricted direct product $\prod_iG_i$ by the normal subgroup $\big\{(f_i)_{i\in\N}\in \prod_\CU G_i\mid\{i: f_i=1\}\in\CU\big\}$. It has the property (guaranteed by \L os's theorem) that a first-order sentence $\theta$ holds in it if and only if the set of $i$ such that $\theta$ holds in $G_i$ is in $\CU$.  For general facts about ultraproducts and \L os's theorem we refer the reader to \cite{eklof}.

  We need the following basic observations:

\begin{prop}\label{AEE_car}
Let $\mathcal{C}$ be an isomorphism-closed class of groups. The following are equivalent:
\begin{enumerate}[label={\rm (\roman*)}]
\item\label{item_AEE1}
there exists a first-order formula $F$ such that a finite group is in $\mathcal{C}$ if and only if it satisfies $F;$
\item\label{item_AEE2}
there is no AEE pair $((G_n),(H_n))$ of sequences of finite groups, with $G_n\in\mathcal{C}$, $H_n\notin\mathcal{C}$ for all $n$.
\end{enumerate}
\end{prop}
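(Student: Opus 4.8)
The plan is to establish the two implications separately; neither needs \L os's theorem, only the syntactic fact that the first-order language of groups has countably many sentences.

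For \ref{item_AEE1}$\Rightarrow$\ref{item_AEE2} I would argue directly. Assume a sentence $F$ has the property in \ref{item_AEE1}, and suppose for contradiction that $((G_n),(H_n))$ is an AEE pair of sequences of finite groups with $G_n\in\mathcal{C}$ and $H_n\notin\mathcal{C}$ for all $n$. Then $G_n\models F$ and $H_n\models\neg F$ for every $n$, while applying the AEE property to the single sentence $F$ yields an $m$ with $G_n\models F\Leftrightarrow H_n\models F$ for all $n\ge m$; choosing any such $n$ contradicts $H_n\models\neg F$.

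For the converse I would prove the contrapositive of \ref{item_AEE1}. Fix an enumeration $\theta_1,\theta_2,\dots$ of all first-order sentences in the language of groups, and for a finite group $G$ and $n\in\N$ call the tuple $\big([G\models\theta_1],\dots,[G\models\theta_n]\big)\in\{0,1\}^n$ its \emph{$n$-type}; only finitely many $n$-types exist, and they partition the finite groups into finitely many classes. The central remark is: if, for some $n$, the class of finite members of $\mathcal{C}$ is a union of such classes, then it is defined among finite groups by the disjunction, over the relevant $n$-types $t$, of $\bigwedge_{k=1}^n\theta_k^{t_k}$ (where $\theta^1:=\theta$, $\theta^0:=\neg\theta$), so \ref{item_AEE1} holds. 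Hence, assuming \ref{item_AEE1} fails, for every $n$ the finite members of $\mathcal{C}$ do \emph{not} form a union of $n$-type classes, which forces the existence of finite groups $G_n\in\mathcal{C}$ and $H_n\notin\mathcal{C}$ realising the same $n$-type, i.e.\ with $G_n\models\theta_k\Leftrightarrow H_n\models\theta_k$ for $1\le k\le n$.

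Finally I would verify that this $((G_n),(H_n))$ is an AEE pair, which then contradicts \ref{item_AEE2}: given a sentence $F$, write $F=\theta_{k_0}$; for every $n\ge k_0$ the groups $G_n$ and $H_n$ share an $n$-type with $k_0\le n$, so $G_n\models F\Leftrightarrow H_n\models F$, and $m_F:=k_0$ works. I do not expect a genuine obstacle here; the only point needing a little care is the equivalence, used in the central remark and its contrapositive, between ``$\mathcal{C}$ is not first-order definable among finite groups'' and ``for every $n$, the class of finite members of $\mathcal{C}$ is not saturated under sharing an $n$-type'', together with the trivial observation that whenever an $n$-type is realised both inside and outside $\mathcal{C}$ by finite groups, the witnesses $G_n,H_n$ may be chosen accordingly.
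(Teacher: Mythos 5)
Your proof is correct and follows essentially the same strategy as the paper: the forward direction is the same one-line observation, and for the converse both arguments diagonalize over an enumeration of sentences to produce, for each $n$, finite groups $G_n\in\mathcal{C}$ and $H_n\notin\mathcal{C}$ agreeing on the first $n$ sentences, which is exactly an AEE pair. Your bookkeeping via $n$-types is in fact the more careful rendering of this step: the paper literally sets $g_n=\bigwedge_{k\leq n}f_k$ over an enumeration of \emph{all} sentences (which is eventually unsatisfiable, since some $f_k$ and its negation both occur) and asserts without justification that $G_n,H_n$ as required exist; your observation that failure of definability forces some $n$-type to be realised both inside and outside $\mathcal{C}$ is precisely the missing justification.
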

\begin{proof}
If $(G_n)$, $(H_n)$ are AEE sequences with $G_n\in\mathcal{C}$, $H_n\notin\mathcal{C}$ for all $n$, then any formula $f$ that holds 
for all finite groups in $ \mathcal{C}$ holds for all $G_n$, so holds in $H_n$ for large $n$. Therefore \ref{item_AEE1} implies \ref{item_AEE2}.  

Conversely, suppose that \ref{item_AEE1} fails. The set of first-order sentences (with variables drawn from a fixed countable set) is countable; let $(f_n)$ be an enumeration of its elements and write $g_n=\bigwedge_{k\leq n} f_k$ for each $n$.  Since $g_n$ does not characterize the family of finite groups in $\mathcal{C}$, there are finite groups $G_n$, $H_n$ in which $g_n$ holds with $G_n\in \mathcal{C}$ and $H_n\notin \mathcal{C}$.  Evidently $((G_n),(H_n))$ is an AEE pair of sequences and  the result follows.
\end{proof}

\begin{prop}\label{AEE_ultra}
Let $(G_n)$, $(H_n)$ be two sequences of groups. Then $(G_n)$ and $(H_n)$ are AEE if and only if for every non-principal ultrafilter $\CU$ on the integers, the ultraproducts ${\prod}_\CU G_n$ and ${\prod}_\CU H_n$ are elementarily equivalent.

In particular, if for every $\CU$, the groups ${\prod}_\CU G_n$ and ${\prod}_\CU H_n$ are isomorphic, then $(G_n)$ and $(H_n)$ are AEE.
\end{prop}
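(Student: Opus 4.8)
The plan is to deduce both directions from \L os's theorem, so that the whole statement reduces to elementary bookkeeping with ultrafilters on $\N$. Fix a first-order sentence $F$ and, for a sequence $(K_n)$ of groups, write $S_F(K)=\{n:K_n\models F\}$. By \L os's theorem, for every non-principal ultrafilter $\CU$ one has ${\prod}_\CU K_n\models F$ if and only if $S_F(K)\in\CU$. Hence ``${\prod}_\CU G_n$ and ${\prod}_\CU H_n$ are elementarily equivalent for every non-principal $\CU$'' is the assertion: for every $F$ and every non-principal $\CU$, $S_F(G)\in\CU\Leftrightarrow S_F(H)\in\CU$; while ``$(G_n)$ and $(H_n)$ are AEE'' is the assertion: for every $F$, the symmetric difference $S_F(G)\,\triangle\, S_F(H)$ is finite. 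The task is to show these two conditions are equivalent.

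For the forward implication I would assume $(G_n),(H_n)$ are AEE, take a non-principal $\CU$ and a sentence $F$, and set $S=\{n\geq m_F\}$, where $m_F$ is as in the definition of AEE, so that $S_F(G)$ and $S_F(H)$ agree on $S$. Since $\CU$ is non-principal it contains every cofinite subset of $\N$, in particular $S\in\CU$; intersecting with $S$ then gives $S_F(G)\in\CU\Leftrightarrow S_F(G)\cap S\in\CU\Leftrightarrow S_F(H)\cap S\in\CU\Leftrightarrow S_F(H)\in\CU$, which is what is needed.

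For the converse I would argue by contraposition. If $(G_n),(H_n)$ is not AEE, some sentence $F$ has $S_F(G)\,\triangle\, S_F(H)$ infinite, so, replacing $F$ by $\neg F$ if necessary, the set $D=S_F(G)\setminus S_F(H)=\{n:G_n\models F,\ H_n\not\models F\}$ is infinite. Since $D$ meets every cofinite set, the cofinite filter together with $D$ generates a proper filter, which extends to a non-principal ultrafilter $\CU$ with $D\in\CU$; then $S_F(G)\supseteq D$ forces ${\prod}_\CU G_n\models F$, while $\N\setminus S_F(H)\supseteq D$ forces ${\prod}_\CU H_n\not\models F$, so the ultraproducts are not elementarily equivalent. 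The final ``in particular'' is then immediate, since isomorphic groups are elementarily equivalent.

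I do not expect a genuine obstacle here; the statement is purely formal and the argument works verbatim for structures in any first-order language, as announced for this section. The only points needing a little care are the standard facts that a non-principal ultrafilter on $\N$ contains all cofinite sets and that an infinite subset of $\N$ can be enlarged to a member of such an ultrafilter, and the trivial observation that ``sets whose symmetric difference is finite belong to a non-principal ultrafilter together or not at all''.
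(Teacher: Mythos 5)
Your proposal is correct and follows essentially the same route as the paper: the paper's (very terse) proof likewise handles the direction ``not AEE $\Rightarrow$ some ultraproducts differ'' by picking a sentence $F$ and an infinite set on which the two sequences disagree and taking a non-principal ultrafilter supported on that set, and dismisses the other direction as immediate. You have merely supplied the routine details (\L os's theorem, the reduction to a one-sided infinite difference via $\neg F$, and the cofinite-filter argument) that the paper omits.
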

\begin{proof}
If there exist a first-order sentence $F$ and an infinite subset $I$ such that for every $n\in I$, the group $G_n$ satisfies $F$ but not $H_n$, choose a non-principal ultrafilter $\CU$ supported by $U$. Then $F$ holds in ${\prod}_\CU G_n$ but not in ${\prod}_\CU H_n$. The converse is immediate too.
\end{proof}

\begin{rem}\label{CH_rem}
If we assume the continuum hypothesis and $(G_n)$, $(H_n)$ are sequences of countable groups, then 
${\prod}_\CU G_n$ and ${\prod}_\CU H_n$ are elementarily equivalent if and only if they are isomorphic. This is because they are $\omega_1$-saturated structures of cardinality $\omega_1$ and hence determined up to isomorphism by their first-order theories (see \cite[Theorem 9.7]{Poi}). For the same reason, under the continuum hypothesis if $(G_n)$ elementarily converges to some group, then all non-principal ultraproducts ${\prod}_\CU G_n$ are isomorphic.
\end{rem}

\begin{prop}\label{pseudo_C}
Let $\mathcal{C}$ be a class of groups and let $G$ be a group. The following are equivalent:
\begin{enumerate}[label={\rm (\roman*)}]
\item\label{item_pc} $G$ is pseudo-$\mathcal{C}$;
\item\label{item_eeu} $G$ is elementarily equivalent to some ultraproduct of a sequence of groups in~$\mathcal{C}$;
\item\label{item_aee} there exists a sequence $(G_n)$ of groups in $\mathcal{C}$ that elementarily converges to $G$ $($that is, the the sequence $(G,G_n)$ is an AEE pair in the above sense$)$. 
\end{enumerate}
\end{prop}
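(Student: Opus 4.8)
The plan is to prove the cycle \ref{item_eeu} $\Rightarrow$ \ref{item_pc} $\Rightarrow$ \ref{item_aee} $\Rightarrow$ \ref{item_eeu}; each step is a short application of \L os's theorem together with the bookkeeping already carried out in the proof of Proposition \ref{AEE_car}. For \ref{item_eeu} $\Rightarrow$ \ref{item_pc}: suppose $G \equiv \prod_{\CU} G_n$ with every $G_n \in \mathcal{C}$, and let $\theta$ be any sentence holding in all groups of $\mathcal{C}$. Then $\{n : G_n \models \theta\} = \N \in \CU$, so $\prod_{\CU} G_n \models \theta$ by \L os's theorem, hence $G \models \theta$. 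Thus $G$ satisfies every sentence satisfied by all of $\mathcal{C}$, i.e.\ $G$ is pseudo-$\mathcal{C}$.

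For \ref{item_aee} $\Rightarrow$ \ref{item_eeu}, let $(G_n)$ be a sequence in $\mathcal{C}$ elementarily converging to $G$ and fix any non-principal ultrafilter $\CU$ on $\N$. Given a sentence $\theta$, asymptotic elementary equivalence of the constant sequence $(G)$ with $(G_n)$ yields an $m$ such that $G_n \models \theta \iff G \models \theta$ for all $n \geq m$. If $G \models \theta$ then $\{n : G_n \models \theta\}$ is cofinite and so lies in $\CU$; if $G \models \neg\theta$, the same reasoning applied to $\neg\theta$ shows that $\{n : G_n \models \theta\}$ is finite and so not in $\CU$. In both cases $\prod_{\CU} G_n \models \theta \iff G \models \theta$, so $G \equiv \prod_{\CU} G_n$, which is an ultraproduct of a sequence in $\mathcal{C}$.

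The implication \ref{item_pc} $\Rightarrow$ \ref{item_aee} is the one that needs a genuine argument, and I would run it exactly as in the proof of Proposition \ref{AEE_car}. Enumerate the countably many first-order sentences in the language of group theory as $(f_k)_{k\geq 1}$, and for each $n$ set
\[
g_n \;=\; \bigwedge_{\substack{k\leq n\\ G\models f_k}} f_k \;\wedge\; \bigwedge_{\substack{k\leq n\\ G\not\models f_k}} \neg f_k ,
\]
so that $G \models g_n$ by construction. The crucial point is that $g_n$ holds in some group of $\mathcal{C}$: otherwise $\neg g_n$ would hold in every group of $\mathcal{C}$, and then, $G$ being pseudo-$\mathcal{C}$, we would get $G \models \neg g_n$, contradicting $G \models g_n$. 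Picking $G_n \in \mathcal{C}$ with $G_n \models g_n$, every sentence $\theta = f_k$ is decided in $G_n$ the same way as in $G$ for all $n \geq k$, so $(G)$ and $(G_n)$ form an AEE pair, i.e.\ $(G_n)$ elementarily converges to $G$. (If $\mathcal{C}$ is empty, its theory contains a sentence false in every group, so no group is pseudo-$\mathcal{C}$ and the implication holds vacuously.) The main obstacle, modest as it is, is exactly this satisfiability claim: one must know that the finite fragments $g_n$ of the complete theory of $G$ can each be realised inside $\mathcal{C}$, and this is precisely the place where the hypothesis that $G$ is pseudo-$\mathcal{C}$ is used, in contrapositive form.
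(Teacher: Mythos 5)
Your proof is correct and follows essentially the same route as the paper: the easy implications are handled by \L os's theorem, and the substantive direction \ref{item_pc}$\Rightarrow$\ref{item_aee} uses the same enumeration of sentences, finite conjunctions $g_n$, and the contrapositive satisfiability argument (the paper enumerates only the sentences true in $G$, which already encodes your negation clauses). The only cosmetic difference is that you prove a single cycle of implications rather than the paper's two chains.
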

\begin{proof}
The implications \ref{item_aee}$\Rightarrow$\ref{item_eeu}$\Rightarrow$\ref{item_pc} hold by \L os's theorem. Suppose that \ref{item_pc} holds. Let $(f_n)$ be an enumeration of the first-order sentences holding in $G$ (with variables in a given countable set) and define $g_n=\bigwedge_{k\leq n}f_k$ for each $n$. For each $n$,  since $G$ is pseudo-$\mathcal C$ and satisfies $g_n$
there is a group $G_n$ in $\mathcal C$ satisfying $g_n$.  Then the sequence $(G_n)$ converges to $G$.
\end{proof}

\section{Abelian groups, nilpotence and Theorem \ref{thm_abel}}

\begin{prop}\label{elab}  Let $E$ be an infinite elementary abelian $p$-group for a prime $p$.  Then the definable subsets of $E$ are precisely the finite and cofinite subsets.

In particular, a proper subgroup of $E$ is definable if and only if is finite; in particular there is no maximal definable subgroup.
\end{prop}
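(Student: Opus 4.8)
The plan is to exploit the abundance of automorphisms of $E$. Regard $E$ as a vector space over $\FF_p$; since $E$ has exponent $p$ every group endomorphism of $E$ is automatically $\FF_p$-linear, so $\mathrm{Aut}(E)=\GL(E)$, the group of $\FF_p$-linear automorphisms. The starting observation is that a definable subset is invariant under all automorphisms fixing its parameters. Precisely, if $D=F(\bar a,E)$ for a formula $F(\underline x,y)$ and a tuple $\bar a=(a_1,\dots,a_n)\in E^n$, and $g\in\GL(E)$ fixes $a_1,\dots,a_n$, then, $g$ being an isomorphism $E\to E$, we have $h\in D\iff E\models F(\bar a,h)\iff E\models F(g\bar a,gh)\iff E\models F(\bar a,gh)\iff gh\in D$, so $g(D)=D$. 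Every such $g$ fixes pointwise the finite subspace $V=\langle a_1,\dots,a_n\rangle$; writing $\GL(E)_V$ for the pointwise stabiliser of $V$, we conclude that every definable subset of $E$ is a union of $\GL(E)_V$-orbits for some finite-dimensional subspace $V$.

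Next I would determine these orbits. Each vector of $V$ is a one-point orbit, and I claim $E\setminus V$ is a single $\GL(E)_V$-orbit. Given $x,x'\in E\setminus V$ it suffices to find $g\in\GL(E)_V$ with $g(x)=x'$. Fix a basis $v_1,\dots,v_k$ of $V$; since $x\notin V$ and $x'\notin V$, both $v_1,\dots,v_k,x$ and $v_1,\dots,v_k,x'$ are bases of $V+\FF_p x$ and $V+\FF_p x'$ respectively, so there is an $\FF_p$-isomorphism $\phi\colon V+\FF_p x\to V+\FF_p x'$ with $\phi|_V=\mathrm{id}$ and $\phi(x)=x'$. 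Choose complements $W,W'$ with $E=(V+\FF_p x)\oplus W=(V+\FF_p x')\oplus W'$; as $E$ is infinite-dimensional and the first summands are finite-dimensional, $\dim W=\dim W'=\dim E$, so there is an $\FF_p$-isomorphism $\psi\colon W\to W'$, and $g=\phi\oplus\psi$ lies in $\GL(E)_V$ with $g(x)=x'$. Hence for a definable $D$ we have $D=(D\cap V)\cup(D\cap(E\setminus V))$, where $D\cap V$ is finite and $D\cap(E\setminus V)$ is empty or equal to $E\setminus V$; so $D$ is finite or cofinite. Conversely any finite subset of $E$ is defined by a disjunction of equations with parameters, hence so is any cofinite subset, which proves the first assertion. (Alternatively, the same conclusion follows from quantifier elimination for infinite $\FF_p$-vector spaces, since the atomic formulas in one variable with parameters define singletons, $E$, or $\emptyset$.)

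For the consequences, let $H$ be a proper subgroup of $E$. If $H$ is finite it is definable by the above. Conversely, if $H$ is definable it is finite or cofinite; but $H$ being proper, some coset $g+H$ lies in $E\setminus H$, so $|E\setminus H|\geq|H|$, and cofiniteness of $H$ would force $H$ to be finite. Thus a proper subgroup of $E$ is definable if and only if it is finite. Finally, a definable subgroup of $E$ is either finite or equal to $E$ (a proper cofinite subgroup being impossible as just shown), and the finite subgroups of $E$ have no largest element: given a finite subgroup $H$ and $x\in E\setminus H$, the subgroup $H+\FF_p x$ is finite and strictly larger. Hence there is no maximal definable subgroup; in particular no maximal subgroup of $E$ is definable.

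I expect the only point needing care to be the extension lemma in the second paragraph — verifying that suitable complements $W,W'$ of matching (infinite) dimension exist so that $\phi$ extends to a genuine element of $\GL(E)$; the rest is bookkeeping, together with the elementary remark that for an exponent-$p$ group ``automorphism'' and ``$\FF_p$-linear automorphism'' coincide.
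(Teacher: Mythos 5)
Your proof is correct and follows essentially the same route as the paper: both show that a definable set is invariant under the pointwise stabiliser in $\mathrm{Aut}(E)$ of the span of the parameters, and that this stabiliser acts transitively on the complement of that span, forcing the set to be finite or cofinite. You simply supply more detail (the explicit extension of a partial linear map to an automorphism, and the derivation of the consequences) than the paper does.
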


\begin{proof}  Clearly finite and cofinite subsets are definable.  Now let $L$ be a definable subset; thus there exist a $k$-tuple $(e_1,\dots,e_k)$ of elements of $E$ and a formula $P(y_1,\dots, y_k, x)$ with $L=P(e_1,\dots,e_k,E)$. 
Write $F=\{e_1,\dots,e_k\}$ and let $A$ be the group of automorphisms of $E$ fixing $F$ pointwise.  Then $A$ acts transitively on the set
$E\smallsetminus\langle F\rangle$, and since $A$ must map $L$ to itself it follows that $L\subseteq \langle F\rangle$ or $E\smallsetminus \langle F\rangle \subseteq L$.
\end{proof}

The next two results give one of the implications in the second assertion of Theorem \ref{thm_abel}(b).

\begin{lem}\label{nilppi}
Let $G$ be a finite group. For integers $p,q$, let $F_{p,q}$ be the sentence 
\[(\forall x\forall y)\big(([x^p,y]=1\wedge [x,y^q]=1)\to [x,y]= 1\big).\]
The following are equivalent:
\begin{enumerate}[label={\rm (\roman*)}]
\item\label{item_gni} $G$ is nilpotent;
\item\label{item_pqf} for all distinct primes $p,q$, $G$ satisfies $F_{p,q}$;
\item\label{item_pqfd} for all distinct primes dividing $|G|$, $G$ satisfies $F_{p,q}$.
\end{enumerate}
\end{lem}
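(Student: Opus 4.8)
The plan is to prove the chain of implications $\ref{item_gni}\Rightarrow\ref{item_pqf}\Rightarrow\ref{item_pqfd}\Rightarrow\ref{item_gni}$; only the first and last carry content, and the middle one is trivial since $F_{p,q}$ is automatically true when $G$ has order prime to $p$ or to $q$ (if $p \nmid |G|$ then $x \mapsto x^p$ is a bijection on $\langle x\rangle$, so $[x^p,y]=1$ forces $[x,y]=1$ directly, and similarly for $q$). So I only need to address the two substantive directions.

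For $\ref{item_gni}\Rightarrow\ref{item_pqf}$: suppose $G$ is nilpotent, so $G = \prod_r G_r$ is the direct product of its Sylow subgroups $G_r$. Fix distinct primes $p,q$ and elements $x,y$ with $[x^p,y]=1$ and $[x,y^q]=1$. Decompose $x = \prod_r x_r$ and $y = \prod_r y_r$ along the Sylow factors. It suffices to show $[x_r,y_r]=1$ for every prime $r$, since elements of coprime Sylow subgroups commute. If $r \neq p$, then $x_r^p$ generates the same cyclic group as $x_r$ (as $p$ is invertible mod $|x_r|$), so $[x_r^p,y_r]=1$ already gives $[x_r,y_r]=1$; the relation $[x^p,y]=1$ projects to $[x_r^p,y_r]=1$ in $G_r$, so we are done for all $r \neq p$. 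Symmetrically, using $[x,y^q]=1$ we get $[x_r,y_r]=1$ for all $r \neq q$. Since $p \neq q$, every prime $r$ is covered by at least one of the two arguments, so $[x,y]=1$.

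For $\ref{item_pqfd}\Rightarrow\ref{item_gni}$: I would argue by contraposition. Suppose $G$ is not nilpotent. By a standard fact (e.g.\ a minimal counterexample argument, or Wielandt's criterion / the fact that a finite group all of whose Sylow subgroups are normal is nilpotent), $G$ has a non-normal Sylow $p$-subgroup $P$ for some prime $p$ dividing $|G|$. Then $N_G(P) \neq G$, so there is an element $y$ of $G$ (which we may take of prime power order $q^k$ for some prime $q$ dividing $|G|$, since $N_G(P)$ is a proper subgroup and hence fails to contain some Sylow $q$-subgroup) that does not normalise $P$; note $q \neq p$ is forced because $N_G(P) \supseteq P$ contains a full Sylow $p$-subgroup. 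Pick $x \in P$ with $x^y \notin P$, or more carefully: I want $x,y$ witnessing the failure of $F_{p,q}$, i.e.\ $[x^p,y]=1$, $[x,y^q]=1$, but $[x,y]\neq1$. The cleanest route is to instead invoke the classical structural result directly: a finite group is nilpotent iff every two elements of coprime prime-power orders commute. Granting that, if $G$ is not nilpotent there exist $a,b$ of coprime prime-power orders $p^i, q^j$ with $[a,b]\neq 1$; then $a^{p^i}=1$ so $[a^p, \cdot]$ need not vanish — so instead take $x$ a generator of $\langle a\rangle$ and replace $p$ by a prime not dividing $|x|$… this does not quite fit the sentence $F_{p,q}$ as written.

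The point where care is genuinely needed — and which I expect to be the main obstacle — is producing, from non-nilpotence, a single pair $(x,y)$ and a single pair of distinct primes $(p,q)$ dividing $|G|$ such that $[x^p,y]=1$ and $[x,y^q]=1$ but $[x,y]\neq1$. The natural source is a non-nilpotent situation where a $q$-element $y$ acts non-trivially by conjugation on a $p$-group but $y^q$ acts trivially, and simultaneously $x$ is chosen so that $x^p$ centralises $y$; concretely, inside a minimal non-nilpotent subgroup of $G$ (a Schmidt group: $P \rtimes \langle y\rangle$ with $P$ a $p$-group, $\langle y\rangle$ cyclic of order $q^k$, $y^q$ central) one takes $y$ of order $q^k$ acting non-trivially, $x \in P$ not centralised by $y$; then $[x,y^q]=1$ since $y^q$ is central, and $[x^p,y]$ — here one uses that the action of $\langle y\rangle$ on $P/\Phi(P)$ is such that $P^p \subseteq \Phi(P)$ when $p$ is odd (and a small separate check when $p=2$), so $x^p$ is centralised modulo $\Phi(P)$, from which with a little more work $[x^p,y]=1$ can be arranged, perhaps after adjusting $x$ within its coset. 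Once such a Schmidt subgroup is located inside $G$ and the witnesses are exhibited there, they witness $\neg F_{p,q}$ in $G$ as well, with $p,q \mid |G|$, completing the contrapositive. I would spend the bulk of the proof making this Schmidt-group extraction and the verification of the three conditions precise.
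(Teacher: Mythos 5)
Your \ref{item_gni}$\Rightarrow$\ref{item_pqf} argument is exactly the paper's (Sylow decomposition, with the observation that $x_r$ is a power of $x_r^p$ whenever $r\neq p$), and \ref{item_pqf}$\Rightarrow$\ref{item_pqfd} is indeed trivial. The problem is \ref{item_pqfd}$\Rightarrow$\ref{item_gni}, which you correctly identify as the crux but do not actually prove. You reach the right starting point --- the classical fact that a non-nilpotent finite group contains non-commuting elements $x,y$ of coprime prime-power orders $p^i,q^j$ with $p,q$ dividing $|G|$ --- and then abandon it for a Schmidt-group extraction whose decisive step (``$[x^p,y]=1$ can be arranged, perhaps after adjusting $x$ within its coset'') is left unjustified; the appeal to $P^p\subseteq\Phi(P)$ only controls $x^p$ modulo $\Phi(P)$ and does not give $[x^p,y]=1$. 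As written, the contrapositive direction has a genuine gap.

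The missing idea is an elementary iteration, and it is how the paper finishes. Starting from $x$ of $p$-power order and $y$ of $q$-power order with $[x,y]\neq 1$, replace $x$ by $x^p$ repeatedly: since $x^{p^i}=1$ commutes with $y$, there is a last exponent $k$ with $[x^{p^k},y]\neq 1$, and setting $x_0=x^{p^k}$ gives $[x_0,y]\neq 1$ and $[x_0^p,y]=1$. Now do the same with $y$: there is a last $l$ with $[x_0,y^{q^l}]\neq 1$, and $y_0=y^{q^l}$ satisfies $[x_0,y_0]\neq 1$ and $[x_0,y_0^q]=1$; moreover $[x_0^p,y_0]=1$ persists because $x_0^p$ commutes with $y$ and hence with every power of $y$. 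The pair $(x_0,y_0)$ witnesses $\neg F_{p,q}$ for primes $p,q$ dividing $|G|$, which is the contrapositive of \ref{item_pqfd}$\Rightarrow$\ref{item_gni}. No Schmidt subgroups are needed. (A small side remark: your justification of \ref{item_pqf}$\Rightarrow$\ref{item_pqfd} via ``$F_{p,q}$ holds automatically when $p\nmid|G|$'' argues the wrong direction; the implication is immediate simply because \ref{item_pqfd} quantifies over fewer primes than \ref{item_pqf}.)
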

\begin{proof}
\ref{item_gni}$\Rightarrow$\ref{item_pqf} Let $G$ be a finite nilpotent group, let $p,q$ be distinct primes and let $x,y\in G$ satisfy $[x^p,y]=[x,y^q]=1$. Write $G$ as direct product of its Sylow subgroups and for each prime $s$ denote by $x_s$ the projection of $x$ in the Sylow $s$-subgroup. If $s\neq p$ then $[x_s^p,y_s]=1$ and hence $[x_s,y_s]=1$ (since $x_s$ is a power of $x_s^p$). Similarly, $[x_s^p,y_s]=1$ for $s\neq q$. Hence for every $s$ we have $[x_s,y_s]=1$; thus $[x,y]=1$.

\ref{item_pqf}$\Rightarrow$\ref{item_pqfd} is trivial.

\ref{item_pqfd}$\Rightarrow$\ref{item_gni}  Suppose that $G$ is non-nilpotent. Then there exist distinct prime divisors $p,q$ of $|G|$ and non-commuting elements $x,y$ with $x$ of $p$-power order and $q$ of $q$-power order. We can replace $x$ by $x^p$ and $y$ by $y^q$ as many times as necessary to ensure that $[x,y^q]=[x^p,y]=1\neq [x,y]$.
\end{proof}

\begin{cor}\label{nilppi2}
Among $\varpi$-groups with $\varpi$ finite, nilpotence is characterized by the universal formula $\bigwedge_{p\neq q\in\varpi}F_{p,q}$.\qed
\end{cor}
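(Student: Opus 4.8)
The plan is simply to invoke Lemma \ref{nilppi}, observing that the finite conjunction $\Psi:=\bigwedge_{p\neq q\in\varpi}F_{p,q}$ is sandwiched logically between conditions \ref{item_pqf} and \ref{item_pqfd} of that lemma when $G$ is a $\varpi$-group. First I would note that $\Psi$ is indeed a well-formed formula only because $\varpi$ is finite (so the conjunction is finite), and it is universal since each $F_{p,q}$ is. For the first direction, if $G$ is a nilpotent $\varpi$-group, then by Lemma \ref{nilppi}\ref{item_gni}$\Rightarrow$\ref{item_pqf} it satisfies $F_{p,q}$ for \emph{all} distinct primes $p,q$, in particular for all distinct $p,q\in\varpi$; hence $G\models\Psi$.

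For the converse, suppose $G$ is a $\varpi$-group satisfying $\Psi$. Every prime dividing $|G|$ lies in $\varpi$, so the set of conjuncts indexed by distinct primes dividing $|G|$ is a subfamily of the conjuncts of $\Psi$; thus $G$ satisfies $F_{p,q}$ for all distinct primes $p,q$ dividing $|G|$, which is condition \ref{item_pqfd}. By Lemma \ref{nilppi}\ref{item_pqfd}$\Rightarrow$\ref{item_gni}, $G$ is nilpotent. There is no real obstacle here: the content is entirely in Lemma \ref{nilppi}, and the only point requiring a word of care is that restricting the conjunction from all primes to the finite set $\varpi$ is harmless precisely because, for a $\varpi$-group, the primes that ``matter'' (those dividing $|G|$) are already among those in $\varpi$.
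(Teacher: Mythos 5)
Your proposal is correct and is exactly the argument the paper intends: the corollary is stated with no written proof precisely because it follows immediately from Lemma \ref{nilppi}, via the two implications \ref{item_gni}$\Rightarrow$\ref{item_pqf} and \ref{item_pqfd}$\Rightarrow$\ref{item_gni} that you invoke. Your observation that the prime divisors of a $\varpi$-group all lie in $\varpi$ is the (only) point that needs saying, and you say it.
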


Let us now proceed to the ultraproduct constructions needed for Theorem \ref{thm_abel}.

For each $n$ write $C_n$ and $D_{2n}$ respectively for a cyclic group of order $n$ and a dihedral group of order $2n$. We recall that $\cont=2^{\aleph_0}$ denotes the cardinality of the continuum.  We also
  recall that a group is divisible if the power map $g\mapsto g^n$ is surjective for every integer $n\geq 1$.

We use freely the basic facts that if $A$ is an (additive) abelian group, then every divisible subgroup is a direct summand, and if $A$ is torsion-free then its largest divisible subgroup is $\bigcap n!A$.

\begin{lem}\label{summand_R} Let $(A_n)$ be a sequence of abelian groups, and let $\eps_n\in\N_{\geq 1}\cup\{\infty\}$ be the exponent of $A_n$. Let $\CU$ be a non-principal ultrafilter $\CU$ on $\N$. Then the ultraproduct $A=\prod_\CU A_n$ satisfies $A\cong A\times\QQ^{(\cont)}$ if and only if $\lim_\CU\eps_n=\infty$.
\end{lem}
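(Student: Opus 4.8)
The plan is to analyze the torsion and the torsion-free quotient of $A=\prod_\CU A_n$ separately, using the standard structure theory recalled just before the lemma: a divisible subgroup of an abelian group is a direct summand, and a torsion-free abelian group $B$ has largest divisible subgroup $\bigcap_{n\geq 1} n!B$, which is then a direct summand of $B$, the complement being reduced. The key point to extract is that $\QQ^{(\cont)}$ appears as a summand of $A$ precisely when the torsion-free quotient $A/T(A)$ (equivalently, a torsion-free part of $A$) has an infinite-rank divisible subgroup.

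First I would handle the ``if'' direction, assuming $\lim_\CU \eps_n=\infty$. I would produce elements of $A$ of infinite order and, more importantly, an infinite $\QQ$-linearly independent divisible family. For each $n$ pick $a_n\in A_n$ of order $\eps_n$ (or of very large finite order if $\eps_n=\infty$, using that a torsion abelian group of infinite exponent contains elements of unbounded order, while if some $A_n$ is genuinely of infinite exponent one can instead choose $a_n$ of order $\geq n$); then $a=(a_n)_\CU$ has infinite order in $A$ since $\{n: a_n^k=1\}=\{n:\eps_n\mid k\}\notin\CU$ for each fixed $k$. For divisibility one uses a diagonal/``layered'' argument: since $\eps_n\to\infty$ along $\CU$, for each $k$ the set of $n$ with $k!\mid \eps_n$ (or with $A_n$ containing a copy of $C_{k!}$ suitably placed) is in $\CU$, so one can choose $a_n$ lying in arbitrarily divisible subgroups and conclude $a\in\bigcap_k k! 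A$. Iterating over countably many ultrafilter-generic ``independent'' choices — here one exploits that for unbounded finite abelian groups one has, along $\CU$, cyclic (or homocyclic) direct factors of unbounded order, hence room to pick a sequence of elements whose images in $A/T(A)$ are independent — yields a countably infinite independent divisible set, hence an embedded divisible subgroup $D$ of $A$ with $D\cong \QQ^{(\aleph_0)}$ at least; a cardinality count (the ultraproduct has size $\cont$, and the divisible hull of a countable independent set can be enlarged, or one directly builds a $\cont$-sized independent family by a $2^{\aleph_0}$ branching argument over the generic choices) upgrades this to $\QQ^{(\cont)}$. Since $\QQ^{(\cont)}$ is divisible it is a direct summand, $A\cong \QQ^{(\cont)}\times A'$; and since $\QQ^{(\cont)}\times\QQ^{(\cont)}\cong\QQ^{(\cont)}$ we get $A\cong A\times\QQ^{(\cont)}$.

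For the ``only if'' direction, suppose $\lim_\CU\eps_n\ne\infty$, i.e.\ there is an integer $N$ with $\{n:\eps_n\leq N\}\in\CU$; then by \L os's theorem $A$ has exponent dividing $N!$ (or just bounded), so $A$ is a torsion group of bounded exponent, hence contains no element of infinite order and in particular no copy of $\QQ$. Then $A\times\QQ^{(\cont)}$ has elements of infinite order while $A$ does not, so they cannot be isomorphic. This direction is routine.

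The main obstacle is the ``if'' direction: producing the genuinely \emph{divisible} infinite-rank subgroup rather than merely infinite-order elements. The subtlety is that divisibility in the ultraproduct is a statement about simultaneously solving $x^{k}=a$ for all $k$, which forces a careful diagonalization over the index set compatible with $\CU$ — one must choose the approximating sequences $(a_n)$ so that, $\CU$-almost everywhere, $a_n$ sits inside a subgroup of $A_n$ isomorphic to $C_{m_n}$ with $m_n\to\infty$ and $a_n$ itself deep inside (divisible by a large factorial) — and, simultaneously, ensure $\QQ$-linear independence of countably (then continuum) many such elements modulo torsion. Controlling both ``deep divisibility'' and ``independence'' at once, while respecting the constraint that the relevant index sets must lie in $\CU$, is the technical heart; the cleanest route is probably to reduce to the case where each $A_n$ is cyclic of order $\eps_n$ with $\eps_n$ a large factorial (by passing to suitable direct factors / subquotients of the $A_n$, which only helps since we want a summand), where the divisible hull computation $\bigcap_k k!\big(\prod_\CU C_{\eps_n}\big)$ is transparent.
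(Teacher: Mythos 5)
Your ``only if'' direction and your overall strategy (reduce to cyclic subgroups, then exhibit a torsion-free divisible subgroup of cardinality $\cont$) agree with the paper, and you have correctly located the technical heart of the problem. But the way you propose to cross that heart does not work. The claim that ``for each $k$ the set of $n$ with $k!\mid\eps_n$ (or with $A_n$ containing a copy of $C_{k!}$) is in $\CU$'' is false, and so is the proposed ``cleanest route'' of reducing to cyclic groups of large factorial order: take $A_n=C_{2^n}$, which is precisely the sequence needed for Theorem~A\ref{item_ifS}. Here $\eps_n=2^n$ is never divisible by $k!$ for $k\geq 3$, every cyclic subgroup has $2$-power order, and the ultraproduct is not $2$-divisible (the class of the generators is not in $2A$). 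This fixed-prime-power case is exactly the one the paper cannot dispose of by a ``transparent'' computation: it introduces the subgroups $L_r=\prod_n p^{\max(0,\lfloor k_n/2\rfloor-r)}A_n$, proves $p\overline{L_r}=\overline{L_{r-1}}$, and concludes that the image $\overline{L_0}$ of the ``deep'' elements lies in $\bigcap_m m!A$ and has cardinality $\cont$. You gesture at choosing elements ``deep inside,'' but you never supply an argument that survives the case where the orders are powers of a single prime.

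There is a second gap concerning torsion. You quote the fact that $\bigcap_{m}m!B$ is the largest divisible subgroup when $B$ is \emph{torsion-free}, but then you apply $\bigcap_k k!A$ to the full ultraproduct, which has torsion; for a group with torsion, membership in $\bigcap_k k!A$ does not produce a divisible subgroup (one cannot in general choose compatible roots). The paper resolves this by passing to $A/T$, where $\bigcap_m m!(A/T)$ really is divisible, and then lifting back to $A$ using the fact that in the prime-power case $T$ is a Pr\"ufer group, hence divisible, hence a direct summand with $A/T$ isomorphic to a complement. Your sketch mentions $A/T(A)$ at the outset but never justifies why a divisible summand of $A/T(A)$ yields one of $A$. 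By contrast, the part you flag as delicate --- getting continuum many independent elements --- is the easy part: $\overline{L_0}$ is itself an ultraproduct of groups of unbounded order, so it has cardinality $\cont$ outright, and any torsion-free divisible group of that cardinality is $\QQ^{(\cont)}$; no branching or independence argument is needed.
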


\begin{proof} 
The condition $\lim_\CU\eps_n=\infty$ is clearly necessary, since otherwise $A$ has finite exponent. Hence suppose that $\lim_\CU\eps_n=\infty$; we have to prove that $A$ contains a copy of $\QQ^{(\cont)}$ (which is then automatically a direct summand).   If there are subgroups $B_i\leq A_i$ such that $\prod_\CU B_i$ has a subgroup isomorphic to $\QQ^{(\cont)}$ then so has the larger group $A$, and so we may replace each $A_i$ by a cyclic subgroup of sufficiently large order and assume that each $A_i$ is cyclic.  
We start with two particular cases: the first of them is in fact enough for most of our purposes.

Suppose that for some fixed prime $p$, each $A_n$ is cyclic of order some power $p^{k_n}$ of $p$.  Let $P=\mathrm {Cr}\, A_n$ and write bars for images of subgroups of $P$ in $A$.  Write $l_n=\lfloor k_n/2\rfloor$,  and for each integer $r\geq0$ let $L_r$ be the Cartesian product of the groups 
$p^{\max(0, l_n-r)}\, A_n$.   
Plainly for each $r$ we have $qL_r=L_r$ for all primes $q\neq p$. Since $L_{r-1}$ is the sum of $pL_r$ and a subgroup of elements with only finitely many non-zero entries,  we have $p\overline {L_r}=\overline {L_{r-1}}$. Hence $(\bigcup \overline {L_r})/\overline {L_0}$ is a divisible group and so $n!A\geq (\bigcup \overline {L_r})\geq \overline {L_0}$ for each $n\geq1$.  The group $\overline {L_0}$ is the ultraproduct of the groups $p^{l_n} A_n$ and so it has cardinality $\cont$.
Since the torsion subgroup $T$ of $A$ has $(p-1)p^{r-1}$ elements of order $p^r$ for all
$r$ it is a Pr\"ufer group of rank $1$.  Therefore $\bigcap n!(A/T)$ is a divisible torsion-free group of cardinality $\cont$ and hence isomorphic to $\QQ^{(\cont)}$. Since $A$ splits over the (divisible) subgroup $T$, the result now follows in this case.

Next suppose that all subgroups $A_i$ are infinite cyclic.  Then $A$ is torsion-free, and so its largest
divisible subgroup is $\bigcap_{n>0} n!A$.  However the ultraproduct $\prod_\CU n!A$ lies in the intersection, and hence this
intersection has cardinality $\cont$.

We use now the elementary fact that if $\N$ is the disjoint union of 
finitely many subsets $I_1, \dots ,I_r$ then 
$I_j \in \CU$ for some $j$, and then
${\mathcal V}=\{S\cap I_j \mid S\in \CU\}$ 
is an ultrafilter on $I_j$, and moreover $A$ is isomorphic to the ultraproduct of the groups $\{G_i\mid i\in I_j\}$ with respect to the ultrafilter $\mathcal V$.   

Because of this  and what was proved above, it remains just to consider the case in which all groups $A_i$ are finite, and for each prime $p$ the set $\{i\mid p{\not|} \;|A_i| \}$
is in $\CU$.   But in this case $\prod_\CU A_n$ is clearly divisible and torsion-free, so isomorphic to $\QQ^{(\cont)}$.
\end{proof}

\begin{rem}
{\rm It follows from Szmielew's \cite{Szm} classification of abelian groups up to elementary equivalence that every non-principal ultraproduct of the sequence $(C_{p^n})$ is elementarily equivalent to $C_{p^\infty}\times\mathbb{Z}_{(p)}$, and more generally to $C_{p^\infty}\times\mathbb{Z}_{(p)}\times\QQ^{(\alpha)}$ for every cardinal $\alpha$. Here $\Z_{(p)}=\Z_p\cap\QQ$ is the additive group of rationals with denominator coprime to the prime~$p$.}
\end{rem}

\begin{lem}  Let $(p_n)$ be a sequence of distinct odd primes.  Then 
$$({\mathrm a}) \;\; {\prod}_\CU C_{2^np_n} \cong {\prod}_\CU C_{2^n}.\qquad\hbox{and}\qquad  ({\mathrm b})\;\; {\prod}_\CU D_{2^np_n}\cong {\prod}_\CU D_{2^n}.$$
\end{lem}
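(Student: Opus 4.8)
The plan is to derive both isomorphisms from Lemma~\ref{summand_R}; the point is that in the ultraproduct the extra odd-prime factors collapse to a copy of $\QQ^{(\cont)}$, which is then absorbed by the $2$-part. Here $\CU$ is a non-principal ultrafilter on $\N$ (for principal $\CU$ the two sides are non-isomorphic finite groups), and I use freely that ultraproducts commute with finite direct products and that, since the $p_n$ are pairwise distinct, $\lim_\CU p_n=\infty$ and each prime divides $p_n$ for at most one value of $n$.

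For part~(a), since each $p_n$ is odd the Chinese Remainder Theorem gives $C_{2^np_n}\cong C_{2^n}\times C_{p_n}$, so that
\[{\prod}_\CU C_{2^np_n}\cong\Big({\prod}_\CU C_{2^n}\Big)\times\Big({\prod}_\CU C_{p_n}\Big).\]
I would then identify the second factor: for every $m\geq1$ the set $\{n\mid\gcd(m,p_n)=1\}$ is cofinite, hence lies in $\CU$, so ${\prod}_\CU C_{p_n}$ is divisible; it is torsion-free because $\lim_\CU p_n=\infty$, and it has cardinality $\cont$, so it is isomorphic to $\QQ^{(\cont)}$ (this is precisely the final case treated in the proof of Lemma~\ref{summand_R}). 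Since the exponents of the $C_{2^n}$ tend to infinity, Lemma~\ref{summand_R} gives ${\prod}_\CU C_{2^n}\cong\big({\prod}_\CU C_{2^n}\big)\times\QQ^{(\cont)}$, and combining,
\[{\prod}_\CU C_{2^np_n}\cong\Big({\prod}_\CU C_{2^n}\Big)\times\QQ^{(\cont)}\cong{\prod}_\CU C_{2^n}.\]

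For part~(b) I would reduce to (a) through the functor $\Dih$, writing $D_{2m}=\Dih(C_m)$. One first checks that taking ultraproducts commutes with $\Dih$, that is, ${\prod}_\CU\Dih(A_n)\cong\Dih\big({\prod}_\CU A_n\big)$ for any sequence of abelian groups $(A_n)$: writing $\Dih(A)=A\rtimes_\pm C_2$, in a class $[(a_n,\eps_n)]$ of ${\prod}_\CU(A_n\rtimes_\pm C_2)$ the sign $\eps_n$ may be taken constant along $\CU$ since $C_2$ is finite, and the resulting bijection onto $\big({\prod}_\CU A_n\big)\rtimes_\pm C_2$ is a homomorphism. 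Since $\Dih$ sends isomorphisms to isomorphisms, part~(a) then yields
\[{\prod}_\CU D_{2^np_n}\cong\Dih\Big({\prod}_\CU C_{2^np_n}\Big)\cong\Dih\Big({\prod}_\CU C_{2^n}\Big)\cong{\prod}_\CU D_{2^n}.\]

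I do not expect any serious obstacle here: once Lemma~\ref{summand_R} is available, the only things to verify are elementary — the divisibility, torsion-freeness and cardinality of ${\prod}_\CU C_{p_n}$, and the functoriality of $\Dih$ together with its commutation with ultraproducts. All the real content sits in Lemma~\ref{summand_R}, via the absorption $U\times\QQ^{(\cont)}\cong U$ for $U={\prod}_\CU C_{2^n}$, which is what renders the odd parts invisible to the ultraproduct.
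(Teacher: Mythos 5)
Your proof is correct and takes essentially the same route as the paper: part (a) is the identical decomposition ${\prod}_\CU C_{2^np_n}\cong{\prod}_\CU C_{2^n}\times{\prod}_\CU C_{p_n}$ with ${\prod}_\CU C_{p_n}\cong\QQ^{(\cont)}$ absorbed via Lemma~\ref{summand_R}, and for (b) the paper likewise transports the isomorphism of (a) through the inversion action of the extra involution, which is precisely your observation that $\Dih$ commutes with ultraproducts. The only blemish is a harmless index slip: with the convention $D_{2m}=\Dih(C_m)$ you should write $\Dih\big({\prod}_\CU C_{2^{n-1}p_n}\big)$ and $\Dih\big({\prod}_\CU C_{2^{n-1}}\big)$, which changes nothing in the argument.
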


\begin{proof}  (a) We have $\prod_\CU C_{2^np_n}  \cong \prod_\CU C_{2^n} \oplus \prod_\CU C_{p_n}$.  From Lemma \ref{summand_R}, the group $\prod_\CU C_{2^n}$ has the form $H\oplus \QQ^{(\cont)}$ for a subgroup $H$.  The group $\prod_\CU C_{p_n} $ is divisible and torsion-free (for example by \L os's theorem) and so is isomorphic to $\QQ^{(\cont)}$.  Hence
$$  {\prod}_\CU C_{2^np_n} \cong (H\oplus \QQ^{(\cont)})\oplus \QQ^{(\cont)}\cong H\oplus \QQ^{(\cont)}\cong {\prod}_\CU C_{2^n}.$$

(b)  In each term, the ultraproduct $G$ is a split extension of the corresponding ultraproduct $A$ in (a) by a subgroup $\langle t\rangle$ of order $2$; the element $t$ is the image in $G$ of an element $\hat t$ 
of the (unrestricted) direct product containing a non-central involution in each entry. The element $\hat t$ acts as inversion on the direct products of cyclic groups and hence $t$ acts as inversion on $A$; in particular every subgroup of $A$ is $t$-invariant. The isomorphism in (a) preserves the action of $t$. The result follows.
\end{proof} 

\begin{proof}[Proof of Theorem \ref{thm_abel}]
 If $\theta$ is a first-order sentence that holds in all groups $C_{2^n}$ then it holds for $\prod_\CU{C_{2^n}}$, and so from above it holds in all but finitely many groups $C_{2^np_n}$.
Similarly, a sentence that holds in all nilpotent groups also holds in $\prod_\CU{D_{2^n}}$ and so it holds in some groups $D_{2^np}$ with $p$ an odd prime; however such groups are not nilpotent.  Therefore (a) and the first assertion of (b) follow.   
Corollary \ref{nilppi2} gives one implication in the proof of the second statement of (b).  To prove the other implication we consider wreath products. 

Suppose that $\varpi$ is an infinite set of primes.  Fix $q\in\varpi$, and let $(p_n)$ be a sequence in $\varpi\smallsetminus\{q\}$ tending to infinity. By $A\wr B$, we mean the standard wreath product $A^B\rtimes B$. Consider the groups $G_n=C_{q^n}\wr C_q$, and $H_n=C_{p_nq^n}\wr C_q$. Clearly taking ultraproducts commutes with taking wreath products with a given finite group (on the right). Hence, for every ultrafilter $\CU$, the ultraproducts $\prod_\CU G_n$ and $\prod_\CU H_n$ are isomorphic to respectively
\[\Big(\prod_\CU C_{q^n}\Big)\wr C_q\quad\text{and}\quad
\Big(\big(\prod_\CU C_{p_n}\big)\times \big(\prod_\CU C_{q^n}\big)\Big)\wr C_q.\] 
Since by Lemma \ref{summand_R}, $\prod_\CU C_{p_n}$ is isomorphic to $\QQ^{(\cont)}$ and $\prod_\CU C_{q^n}$ is isomorphic to $\big(\prod_\CU C_{q^n}\big)\times\QQ^{(\cont)}$, we deduce that the ultraproducts are isomorphic.  Therefore $(G_n)$ and $(H_n)$ are AEE sequences by Proposition \ref{AEE_ultra}.  However 
each $G_n$ is a $q$-group and hence nilpotent, and no $H_n$ is nilpotent. By Proposition \ref{AEE_car}, we conclude that no first-order sentence characterizes nilpotent groups among finite $\varpi$-groups.

If $f(x)$ were a first-order formula defining the Fitting subgroup, then the first-order sentence $(\forall x)f(x)$ would hold for all dihedral $2$-groups, and so also for all ultraproducts of such groups.  Hence from above it would hold, for every large enough prime $p$, for the non-nilpotent dihedral group $D_{2^np}$.
\end{proof}

\begin{proof}[Proof of Theorem \ref{psfn_subgroups}]
For every abelian group $A$, we write $\Dih(A)$ for the semidirect product $A\rtimes_\pm C_2$;  the sign $\pm$ indicates that the action on $A$ of the non-trivial element of $C_2$ is by inversion. In particular, $\Dih(C_n)$ is the dihedral group of order $2n$.  It is clear that for all $A$ the derived subgroup of $\Dih(A)$ is $A^2$ and it consists of commutators. 

Let $(p_n)$ be a sequence of primes tending to infinity and for each $n$ set $G_n=\Dih(C_{2^n})\times\Dih(C_{p_n})$.   Write
$\Dih(C_{2^n})=C_{2^n}\rtimes_{\pm}\langle a_n\rangle$ and
$\Dih(C_{p_n})=C_{p_n}\rtimes_{\pm}\langle b_n\rangle$. Fix a
non-principal ultraproduct $G$ of the sequence $(G_n)$ and let $a$, $b$ be the
elements of $G$ corresponding to the sequences $(a_n), (b_n)$.

Define $L=[G,G]\cup ab[G,G]$. Since $[G,G]$ consists of commutators, $L$
is definable.  It is also a subgroup, of index $4$ in $G$, and is isomorphic to the ultraproduct of the
groups $(C_{2^{n-1}})\times C_{p_n})\rtimes_\pm \langle a_nb_n\rangle$.  In
particular, it follows from the proof of Theorem
\ref{thm_abel}\ref{item_nil} that the latter is isomorphic to the
ultraproduct of the groups $\Dih(C_{2^n})$ (over the same non-principal ultrafilter).
Hence $L$ is pseudo-(finite nilpotent).

Define $H=L\cap \C_G(a)$; it is thus definable. Note that $[G,G]\cap \C_G(a)$ is
the ultraproduct of the groups $C_{p^n}$, and $H$ is the semidirect product
$([G,G]\cap \C_G(a))\rtimes_\pm\langle ab\rangle$. In particular, the second
projection, restricted to $H$, is an isomorphism. Therefore, $H$ is
isomorphic to the ultraproduct of the groups $\Dih(C_{p^n})$ and hence has
trivial centre. Since $H$ has trivial centre and is not trivial, it is not
pseudonilpotent.
\end{proof}

\begin{rem}\label{nilpotent_inside}
Theorem \ref{psfn_subgroups} shows that inside a pseudofinite group, the class of definable pseudo-(finite nilpotent) subgroups can be badly behaved. The following notion purports to remedy these shortcomings.  We say that a definable subgroup $H$ of a pseudofinite group $G$ is pseudonilpotent {\it inside} $G$ if, in the theory of pairs of a group with a subgroup, every formula satisfied by a pair consisting of a finite group and a nilpotent subgroup is satisfied by $(G,H)$. This clearly implies that $H$ is pseudo-(finite nilpotent). Moreover if subgroups  $H\leq L$ are definable in $G$ and $L$ is pseudonilpotent inside $G$, then so is $H$. Hence, in the given example where $H$ is not pseudo-(finite nilpotent), the pseudo-(finite nilpotent) definable subgroup $G$ is not pseudonilpotent inside $G$.
\end{rem}

\section{Perfect groups and Theorem \ref{thm_perfect}}
We use a construction similar to the one in \cite[Lemma 2.1.10]{holt}.

Let $n$ be a positive integer and $q=p^e\geq 3$ an odd prime power.
Let $V_n$ be the direct sum of $n$ copies of the natural module for $\mathrm{SL}_2(q)$ over $\FF_{q}$.  
By checking weights, one sees that the exterior square $W_n=\bigwedge^2V_n$, regarded as an $\mathrm{SL}_2(q)$-module, is a direct sum $Y_n\oplus Z_n$, where $Y_n$ is a direct sum of $n(n-1)/2$ copies of the irreducible $3$-dimensional $\mathrm{SL}_2(q)$-module and $Z_n$ is a module of dimension $n(n+1)/2$ on which $\mathrm{SL}_2(q)$ acts trivially.  

Define a multiplication operation on the set $E_n\colon=V_n\times W_n$ by $(v_1,w_1)(v_2,w_2) = (v_1+v_2, w_1+w_2 + v_1\wedge v_2)$. It is easy to check that $E_n$ becomes a group of exponent $p$ and that $[(v_1,w_1),(v_2,w_2)] = (0, v_1\wedge v_2)^2$. Thus $E_n$ is nilpotent of class $2$ with commutator subgroup and centre isomorphic to $W_n$.  

For $\theta\in \mathrm{SL}_2(q)$, write $\theta$ also for the automorphism induced by $\theta$ in $W_n$; then the map   
$(v,w) \mapsto (\theta v,\theta w)$ is easily checked to be an automorphism of $E_n$.  In this way we can define an action of $\mathrm{SL}_2(q)$ on $E_n$.
The image of $Y_n$ in $E_n$ is normal and $\mathrm{SL}_2(q)$-invariant.  Define $G_n\colon=E_n/Y_n$  and identify $Z_n$ with its image in $G_n$. Thus
$Z_n$ is central in $G_n$ and acted on trivially by $\mathrm{SL}_2(q)$, and we have $G_n/Z_n\cong V_n$.   Moreover there is an isomorphism from a vector space of dimension  $\frac{n}2(n+1)$ over $\FF_q$ to $Z_n$; we call the image of a $1$-dimensional space a {\em line} in $G'_n$. 

\begin{lem}\label{comlength}
For every $k\geq 1$ and $n\geq 8k+2$ and every prime power $q$, the group $G_n'=G_n(q)'$ contains a line in which no non-trivial element is a product of $k$ commutators in $L=G_n(q)\rtimes\SL_2(q)$.
\end{lem}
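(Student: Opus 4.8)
The plan is to put a rank function on $Z_n$, show that the $Z_n$-part of every product of $k$ commutators in $L$ has rank at most $4k$ (so certainly at most $8k$), and then take $\ell$ to be the line spanned by a nondegenerate symmetric form. Write $V_n=\FF_q^2\otimes\FF_q^n$ with $\SL_2(q)$ acting on the first tensor factor. The standard decomposition of the exterior square of a tensor product identifies $Z_n\le\bigwedge^2V_n$ (the summand of dimension $n(n+1)/2$ on which $\SL_2(q)$ acts trivially) canonically with $\bigwedge^2\FF_q^2\otimes\mathrm{Sym}^2\FF_q^n\cong\mathrm{Sym}^2\FF_q^n$, which, $q$ being odd, I identify with the space $\mathrm{Sym}_n(\FF_q)$ of symmetric $n\times n$ matrices over $\FF_q$ carrying its usual rank function; this identification is equivariant for the natural $\GL_n(\FF_q)$-action on the second tensor factor of $V_n$, which commutes with that of $\SL_2(q)$. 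Writing $v\in V_n$ as a $2\times n$ matrix over $\FF_q$ and letting $J$ be the $\SL_2(q)$-invariant symplectic form on $\FF_q^2$, a short weight computation shows that the (alternating, bilinear, $\SL_2(q)$-invariant) commutator form $\omega\colon V_n\times V_n\to G_n'=Z_n$ of the class-$2$ group $G_n$ is, up to a nonzero scalar, $\omega(v,v')=v^{\mathsf T}Jv'-v'^{\mathsf T}Jv$; this is a symmetric matrix of rank at most $4$ whose row space lies in $R(v)+R(v')$, where $R(v)\le\FF_q^n$ denotes the span of the two rows of $v$, and one has $R(\theta v)=R(v)$ for every $\theta\in\SL_2(q)$.

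Next I would expand commutators in $L=G_n\rtimes\SL_2(q)$, whose multiplication is $((v,z),\theta)\,((v',z'),\theta')=((v+\theta v',\,z+z'+\omega(v,\theta v')),\,\theta\theta')$. A direct computation shows that for $a,b\in L$ with $G_n$-parts having images $v,w$ in $G_n/G_n'=V_n$ one has $[a,b]=((\gamma,z_1),[\theta_a,\theta_b])$, where $\gamma$ is an $\FF_q$-linear combination of $\SL_2(q)$-translates of $v$ and $w$ and $z_1$ is a sum of boundedly many terms $\omega(u,u')$ with $u,u'$ likewise $\SL_2(q)$-translates of $v$ and $w$; hence $R(\gamma)\subseteq R(v)+R(w)$ and, by the first paragraph, so does the row space of $z_1$. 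Iterating the multiplication law for $c=[a_1,b_1]\cdots[a_k,b_k]$, and using that $\SL_2(q)$ acts trivially on $Z_n$, exhibits the $Z_n$-coordinate of $c$ as $\sum_i z_i$ plus a sum of terms $\omega$ of $\SL_2(q)$-translates of the $\gamma_i$; so if $c$ lies in $Z_n=G_n'$ its row space lies in $\sum_{i=1}^k(R(v_i)+R(w_i))$, a subspace of $\FF_q^n$ of dimension at most $4k$, and therefore $c$ has rank at most $4k$ as a symmetric bilinear form on $\FF_q^n$. Now take a nondegenerate symmetric form $B_0$ on $\FF_q^n$ (one exists since $q$ is odd) and let $\ell=\FF_qB_0\subseteq Z_n=G_n'$ be the line it spans: every non-trivial element of $\ell$ is a nonzero scalar multiple of $B_0$ and so still has rank $n$, and since $n\ge 8k+2>4k$ no non-trivial element of $\ell$ has rank at most $4k$; hence, by the previous sentence, no non-trivial element of $\ell$ is a product of $k$ commutators in $L$.

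The step I expect to be the main obstacle is the commutator bookkeeping in the second paragraph: one must check that a commutator in the whole group $L$ --- not merely in the $p$-group $G_n$ --- still contributes to $Z_n$ only a form whose row space lies in the $\le 4$-dimensional space $R(v)+R(w)$, and, more delicately, that a product of $k$ such accumulates dimension only additively in $k$ rather than, say, like $\binom{k}{2}$. The reason this works is precisely that $\SL_2(q)$ acts trivially on $Z_n$ and preserves all the row spaces $R(\cdot)$ in $V_n$, so conjugations cannot spread a commutator's contribution onto fresh coordinates; granting this, the estimate is a routine if somewhat lengthy expansion, and the remaining ingredients --- the module decomposition, scale-invariance and subadditivity of rank along row spaces, and the existence of a nondegenerate form --- are entirely standard.
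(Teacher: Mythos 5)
Your argument is correct, but it is a genuinely different (and more structural) route than the one in the paper. The paper's proof is a short counting argument: since $G_n'$ is central in $L$, the commutator map factors through $L/G_n'\times L/G_n'$, so $L$ has at most $q^{2k(2n+3)}$ products of $k$ commutators, while $G_n'$ contains more than $q^{n(n+1)/2-1}$ lines meeting pairwise trivially; for $n\geq 8k+2$ the pigeonhole principle produces a line avoiding all of them, with no need to know which line. You instead identify $Z_n\cong\bigwedge^2\FF_q^2\otimes\mathrm{Sym}^2\FF_q^n$ with symmetric $n\times n$ matrices, check that the commutator form is $(v,v')\mapsto v^{\mathsf T}Jv'-v'^{\mathsf T}Jv$ up to a scalar, and track row spaces to show that any product of $k$ commutators lying in $Z_n$ has rank at most $4k$; I verified the bookkeeping you flag as the delicate step (the four factors of $[a,b]$ are $\SL_2(q)$-translates of $g^{\pm1},h^{\pm1}$, their $Z_n$-coordinates cancel, and all cross terms have row space inside $\sum_i(R(v_i)+R(w_i))$, so the accumulation is indeed additive in $k$), and it goes through. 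What your approach buys is an explicit witness (the line spanned by a nondegenerate form), a sharper bound ($n>4k$ already suffices, and in fact any line spanned by a form of rank exceeding $4k$ works, so ``most'' lines do), and a structural description of the set of products of $k$ commutators inside $Z_n$; what the paper's approach buys is brevity, using only centrality of $G_n'$ and a dimension count. Either proof supports the rest of the paper, which only needs the stated existence for $n\geq 8k+2$.
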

\begin{proof}
The quotient group $L/G_n'$ has order $q^{2n}(q^3-q)\leq q^{2n+3}$. Since $G_n'$ is central, the commutator map $L\times L\to L$ factors through a map $L/G_n'\times L/G_n'\to L$. Hence $L$ has at most $q^{2(2n+3)}$ commutators and at most $q^{2k(2n+3)}$ products of $k$ commutators.

The group $G_n'$ consists of $(q^{n(n+1)/2}-1)/(q-1)> q^{n(n+1)/2-1}$ lines, pairwise intersecting in $1$. Hence there is a line containing no non-trivial product of $k$ commutators as soon as $q^{2k(2n+3)}\leq q^{n(n+1)/2-1}$, that is, as soon as $2k(2n+3)\leq \frac12 n(n-1)$. For $n\geq 8k+2$ we have 
\[2k(2n+3)\leq \frac{(n-2)(2n-3)}4=\frac{2n^2-n-6}4\leq \frac{2n^2-2n}{4}=\frac{n(n-1)}2.\qedhere\]
\end{proof}

From now on, assume that $q\equiv \pm1 \pmod {10}$. This is, by Dickson's classification \cite{Dickson}, the condition that ensures that $\mathrm{SL}_2(q)$ contains a copy of the binary icosahedral group $B$, a perfect group of order $120$. Therefore we have actions of $B$ on $\FF_{p}^2$, $V_n$ and $G_n$; since the action on $\FF_{q}^2$ is clearly irreducible, $V_n$ is a sum of irreducible $\FF_{p}B$-modules of dimension $2$.  As a consequence, the semidirect product $V_n\rtimes B$ is a perfect group.  Since $Z_n$ is (contained in) the derived subgroup of $G_n$, we deduce that $H_n\colon=G_n\rtimes B$ is also a perfect group, in which $Z_n$ is central.   

\begin{lem}\label{summand}
Let $F$ be a finite group. Let $N$ be a torsion-free divisible nilpotent group on which $F$ acts, let $Z$ be a divisible central subgroup that has trivial intersection with the derived subgroup $N'$ of $N$, and suppose that $F$ acts trivially on $Z$. Then $N=Z\times M$ for an $F$-invariant subgroup $M$. 
\end{lem}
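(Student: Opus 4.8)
The plan is to push everything down to the abelianisation $V:=N/N'$ and to split $Z$ off there using semisimplicity of $\QQ[F]$, then lift the splitting back to $N$. Since $N$ is divisible (radicable), so is the quotient $V$, and $N'=[N,N]$ is characteristic, hence $F$-invariant; so the $F$-action on $N$ descends to an $F$-action on the divisible abelian group $V$. Write $\pi\colon N\to V$ for the projection. Because $Z\cap N'=1$, the second isomorphism theorem gives $\pi(Z)=ZN'/N'\cong Z$, so $\bar Z:=\pi(Z)$ is a divisible subgroup of $V$ on which $F$ acts trivially, and it is torsion-free because $N$, hence $Z$, is torsion-free.

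First I would pass to $V/T$, where $T$ is the torsion subgroup of $V$; it is characteristic, hence $F$-invariant. The quotient $V/T$ is torsion-free and divisible, i.e.\ a $\QQ$-vector space, and the induced $F$-action makes it a $\QQ[F]$-module. (Alternatively one could quote that $N/N'$ is itself torsion-free, being the first lower central quotient of a torsion-free divisible nilpotent group, and work directly in $V$; routing through $T$ avoids needing this.) Since $\bar Z$ is torsion-free it maps injectively into $V/T$, and its image $\bar Z'$ is a divisible subgroup of the $\QQ$-vector space $V/T$, hence a $\QQ$-subspace, on which $F$ still acts trivially. By Maschke's theorem — $\QQ[F]$ is semisimple, $F$ being finite and $\QQ$ of characteristic $0$ — there is an $F$-invariant $\QQ$-subspace $W\leq V/T$ with $V/T=\bar Z'\oplus W$.

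Next, let $\bar M\leq V$ be the preimage of $W$ under $V\to V/T$; it is $F$-invariant and contains $T$. I would then check $V=\bar Z\oplus\bar M$: the image of $\bar Z\cap\bar M$ in $V/T$ lies in $\bar Z'\cap W=0$, so $\bar Z\cap\bar M\leq T$, and since $\bar Z$ is torsion-free this forces $\bar Z\cap\bar M=0$; while $\bar Z+\bar M$ surjects onto $V/T$ and contains $T$, so equals $V$. Finally set $M:=\pi^{-1}(\bar M)\leq N$. Then $M$ contains $N'=\ker\pi$, hence is normal in $N$, and it is $F$-invariant because $\pi$ is $F$-equivariant and $\bar M$ is $F$-invariant. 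It remains to verify $MZ=N$ (as $\pi(MZ)=\bar M+\bar Z=V$ while $M\supseteq\ker\pi$), $M\cap Z=1$ (as $\pi(M\cap Z)\leq\bar M\cap\bar Z=0$ forces $M\cap Z\leq N'\cap Z=1$), and $[M,Z]=1$ since $Z$ is central; hence $N=M\times Z$ with $M$ the desired $F$-invariant subgroup.

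The only substantive ingredient is Maschke's theorem over the $\QQ$-vector space $V/T$; the part needing care is the module-theoretic bookkeeping — making sure that the subgroup being split off is genuinely a $\QQ$-subspace (this is exactly where divisibility of $Z$ enters) and that passing to the quotient by the torsion subgroup $T$ neither destroys the complement nor reintroduces intersection with $\bar Z$. Nilpotence of $N$ plays no role beyond the ambient setup (it is torsion-freeness plus divisibility that makes $V/T$ a $\QQ$-vector space).
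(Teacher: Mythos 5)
Your proof is correct and follows essentially the same route as the paper's: split $\bar Z$ off $F$-equivariantly in the abelianisation $N/N'$ using semisimplicity of $\QQ[F]$ (Maschke), then pull the complement back to $N$ and use centrality of $Z$ together with $Z\cap N'=1$ to obtain the direct decomposition. The only difference is minor: the paper quotes Warfield's theorem that $N/N'$ is torsion-free and applies Maschke there directly, whereas you sidestep that citation by first passing to the quotient by the torsion subgroup of $N/N'$ — a harmless and self-contained variant.
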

\begin{proof}
For $N$ abelian this follows from Maschke's theorem applied to $N$ as a $\QQ F$-module.  In general, the quotient $N/N'$ is torsion-free 
from \cite[Theorem 8.13]{Warf} and so from the abelian case we have $N/N'=N'Z\times M/N'$ for an $F$-invariant subgroup $M$ containing $N'$.  Thus $N=ZM$ and $Z\cap M\leq Z\cap N'=1$.   Clearly $M\triangleleft\, N$ and so $N=Z\times M$.
\end{proof}

\begin{prop}
Let $q_n=p_n^{e_n}$ be a sequence of prime powers with $q_n\equiv \pm 1\pmod {10}$, such that $p_n\to\infty$.  For each $n\geq0$ let $G_n(q_n)$, $H_n(q_n)$ be the groups constructed as above for the prime power $q_n$.
Then every non-principal ultraproduct of the sequence $(H_n(q_n))$ admits ${\QQ}^{(\cont)}$ as a direct summand. 
\end{prop}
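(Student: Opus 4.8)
The plan is to split off a copy of $\QQ^{(\cont)}$ inside the ultraproduct $\mathbf G:=\prod_\CU G_n(q_n)$ by means of Lemma~\ref{summand}, and then to transport the splitting to $H:=\prod_\CU H_n(q_n)$. First I would record that, since $B$ is finite, taking ultraproducts commutes with the semidirect product by $B$, so $H\cong\mathbf G\rtimes B$ with $\mathbf G$ normal, while $\mathbf Z:=\prod_\CU Z_n$ is a central subgroup of $\mathbf G$ fixed pointwise by $B$. By \L os's theorem $\mathbf G$ is nilpotent (of class $\leq 2$); and since each $G_n(q_n)$ is a finite $p_n$-group with $p_n\to\infty$, for any fixed exponent $m$ all but finitely many $n$ satisfy $p_n\nmid m$, and for these $n$ the $m$-th power map is a bijection of $G_n(q_n)$, which shows that $\mathbf G$ is torsion-free and divisible. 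The same argument shows that any ultraproduct of subgroups of the $Z_n$ is torsion-free and divisible.

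The heart of the argument is to produce a divisible central subgroup of $\mathbf G$ of cardinality $\cont$ meeting $\mathbf G'$ trivially, and this is where Lemma~\ref{comlength} enters. I would set $k_n=\lfloor(n-2)/8\rfloor$, so that $n\geq 8k_n+2$ and $k_n\to\infty$, and use Lemma~\ref{comlength} to choose, for all large $n$, a line $\ell_n\subseteq Z_n$ no non-trivial element of which is a product of $k_n$ commutators in $G_n(q_n)\rtimes\SL_2(q_n)$ --- hence, as $G_n(q_n)$ lies inside that group, not a product of $k_n$ commutators in $G_n(q_n)$; for the finitely many remaining $n$ I would take $\ell_n$ to be an arbitrary line. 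Put $\mathbf Z_0:=\prod_\CU\ell_n\leq\mathbf Z$. Then $\mathbf Z_0$ is central in $\mathbf G$, fixed pointwise by $B$, torsion-free and divisible, and of cardinality $\cont$ because $|\ell_n|=q_n\to\infty$; hence $\mathbf Z_0\cong\QQ^{(\cont)}$.

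To verify $\mathbf Z_0\cap\mathbf G'=1$, write $C_k^{(n)}$ for the set of products of at most $k$ commutators in $G_n(q_n)$; since being a product of at most $k$ commutators is a first-order property, \L os's theorem gives $\mathbf G'=\bigcup_{k\geq1}\prod_\CU C_k^{(n)}$. For each fixed $k$, every $n$ with $k_n\geq k$ satisfies $\ell_n\cap C_k^{(n)}\subseteq\ell_n\cap C_{k_n}^{(n)}=\{1\}$, and this holds for all but finitely many $n$; so $\mathbf Z_0\cap\prod_\CU C_k^{(n)}=1$ for every $k$, whence $\mathbf Z_0\cap\mathbf G'=1$. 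Now Lemma~\ref{summand} applies with $N=\mathbf G$, $F=B$ and $Z=\mathbf Z_0$, giving $\mathbf G=\mathbf Z_0\times M$ for a $B$-invariant subgroup $M$. Since $\mathbf Z_0$ is central in $\mathbf G$ and fixed by $B$, it is central in $H$; and $M\rtimes B$ is a subgroup of $H$ with $\mathbf Z_0\cap(M\rtimes B)=\mathbf Z_0\cap M=1$ and $\mathbf Z_0\cdot(M\rtimes B)=H$, so $H=\mathbf Z_0\times(M\rtimes B)$ and $\mathbf Z_0\cong\QQ^{(\cont)}$ is the desired direct summand.

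I expect the single delicate point to be the equality $\mathbf Z_0\cap\mathbf G'=1$: this is precisely where the commutator-width bound of Lemma~\ref{comlength} is indispensable, since it is what prevents the lines $\ell_n$ from being absorbed into products of boundedly many commutators. Everything else is routine ultraproduct bookkeeping together with Lemma~\ref{summand}.
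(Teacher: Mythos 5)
Your proof is correct and follows essentially the same route as the paper's: choose lines $\ell_n$ avoiding products of $k_n$ commutators via Lemma~\ref{comlength} (the paper instead takes $c_n$ maximal with this property), identify their ultraproduct with $\QQ^{(\cont)}$, check it is central and meets the derived subgroup trivially, and split it off with Lemma~\ref{summand}. You simply spell out more of the routine verifications (nilpotence, torsion-freeness and divisibility of $\prod_\CU G_n(q_n)$, and the \L os\ argument for $\mathbf Z_0\cap\mathbf G'=1$) that the paper leaves implicit.
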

\begin{proof}

Let $c_n$ be the largest integer such that $G(q_n)'$ has a line
$L_n$ containing no non-trivial product of $c_n$ commutators in $H(q_n)$. By Lemma \ref{comlength} we have $c_n\to\infty$.

Fix a non-principal ultrafilter $\CU$ on $\N$, and let $H=\prod_\CU H_n(q_n)$; thus $H=G\rtimes B$ with $G=\prod_\CU G_n(q_n)$. 
The group $L=\prod_\CU L_n$ has cardinality $\cont$ and is abelian, torsion-free and divisible, hence isomorphic to $\QQ^{(\cont)}$. Also $L$ has trivial intersection with the derived subgroup of $G$, and is central in $H$.  By Lemma \ref{summand}, we can write $G=L\times N$ for a $B$-invariant subgroup $N$ and we have $H=(N\rtimes B)\times L$.
\end{proof}

\begin{cor}\label{perfectAEE}
Let $(q_n)$ be a sequence of prime powers as above and let $(p'_n)$ be another sequence of primes with $p'_n\to\infty$. Then the sequence $(H_n(q_n))$ of perfect groups constructed above and the sequence $(H_n(q_n)\times C_{p'_n})$ of non-perfect groups are AEE.
$\qed$
\end{cor}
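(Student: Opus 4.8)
The plan is to deduce Corollary \ref{perfectAEE} directly from the preceding Proposition together with the machinery already assembled in Section \ref{s_preli}. First I would fix a non-principal ultrafilter $\CU$ on $\N$ and compute the two ultraproducts. Since taking ultraproducts commutes with taking a direct product with a fixed sequence of groups (more precisely, $\prod_\CU (A_n\times B_n)\cong \prod_\CU A_n\times \prod_\CU B_n$), we have $\prod_\CU\bigl(H_n(q_n)\times C_{p'_n}\bigr)\cong \bigl(\prod_\CU H_n(q_n)\bigr)\times\bigl(\prod_\CU C_{p'_n}\bigr)$. The second factor is an ultraproduct of cyclic groups of prime order with $p'_n\to\infty$; by \L os's theorem (or Lemma \ref{summand_R}) it is torsion-free and divisible of cardinality $\cont$, hence isomorphic to $\QQ^{(\cont)}$.

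Next I would invoke the Proposition just proved: every non-principal ultraproduct of $(H_n(q_n))$ admits $\QQ^{(\cont)}$ as a direct summand, so $\prod_\CU H_n(q_n)\cong M\times\QQ^{(\cont)}$ for some group $M$. Combining the two computations,
\[\prod_\CU\bigl(H_n(q_n)\times C_{p'_n}\bigr)\cong M\times\QQ^{(\cont)}\times\QQ^{(\cont)}\cong M\times\QQ^{(\cont)}\cong\prod_\CU H_n(q_n),\]
using the evident fact that $\QQ^{(\cont)}\times\QQ^{(\cont)}\cong\QQ^{(\cont)}$ (two copies of a $\QQ$-vector space of dimension $\cont$ form one of dimension $\cont$). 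Since this holds for every non-principal ultrafilter $\CU$, Proposition \ref{AEE_ultra} (second part) gives that the sequences $(H_n(q_n))$ and $(H_n(q_n)\times C_{p'_n})$ are AEE.

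Finally I would note the side remarks needed for the statement's phrasing: each $H_n(q_n)$ is perfect (established in the text preceding Lemma \ref{summand}), while $H_n(q_n)\times C_{p'_n}$ surjects onto the non-trivial abelian group $C_{p'_n}$ and so is not perfect; hence the two sequences lie respectively inside and outside the class of perfect groups, which is what makes the AEE conclusion useful (via Proposition \ref{AEE_car}) for Theorem \ref{thm_perfect}. There is essentially no obstacle here: all the real work is in the Proposition above, and the corollary is a bookkeeping step combining the commutation of ultraproducts with finite direct products, the identification of $\prod_\CU C_{p'_n}$ with $\QQ^{(\cont)}$, and the absorption property of $\QQ^{(\cont)}$. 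The only point requiring a word of care is that the direct-summand decomposition supplied by the Proposition is genuinely a group direct product decomposition $M\times\QQ^{(\cont)}$ (not merely that $\QQ^{(\cont)}$ embeds), so that the absorption argument applies verbatim — but that is exactly what Lemma \ref{summand} delivers.
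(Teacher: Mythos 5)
Your argument is correct and is exactly the paper's intended deduction: the paper leaves the corollary as an immediate consequence ($\qed$ with no written proof) of the preceding Proposition, via the same chain — commutation of ultraproducts with direct products, identification of $\prod_\CU C_{p'_n}$ with $\QQ^{(\cont)}$, absorption of $\QQ^{(\cont)}\times\QQ^{(\cont)}$ into $\QQ^{(\cont)}$, and Proposition \ref{AEE_ultra}. Nothing is missing; you have simply written out the bookkeeping the authors suppressed.
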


Theorem B follows immediately from Corollary \ref{perfectAEE}.

\begin{rem}
{\rm Let $p$ be a fixed prime, and let $q_n=p^{e_n}$ with $q_n\to\infty$.  An easy variant of the proof shows that every non-principal ultraproduct of the sequence $(H_n(p^{e_n}))$ admits ${\FF_p}^{(\cont)}$ as a direct summand.  In this case, we deduce that for every sequence $(k_n)$ tending to infinity, the sequences $(H_n(p^{e_n}))$ and $(H_n(p^{e_n})\times C_p^{k_n})$ are AEE.}
\end{rem}

\section{Products and powers of simple groups}

We start with the following positive result.

\begin{prop}\label{form_produ}
There exists a first-order sentence $K$ such that a finite group satisfies $K$ if and only if it is a finite direct product of non-abelian simple groups.
\end{prop}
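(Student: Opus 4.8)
The plan is to build the sentence $K$ out of three ingredients: a sentence recognising (among finite groups) the property of having no non-trivial soluble normal subgroup, a sentence recognising the property of being a product of minimal normal subgroups, and Felgner's theorem on the first-order definability of finite non-abelian simple groups. A finite group $G$ is a direct product of non-abelian simple groups if and only if $G$ is a direct product of its minimal normal subgroups and each of these is non-abelian simple; equivalently, $G$ has trivial soluble radical, $G$ equals the product of all its minimal normal subgroups, and every minimal normal subgroup is simple rather than a product of several copies of a simple group.

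First I would express ``$\R(G)=1$'' by the sentence $(\forall x)((x=1)\vee(\exists y)([x,x^y]\neq 1))$ already noted in the introduction, or equivalently by $(\forall x)(\rho(x)\to x=1)$ using the formula $\rho$ of \cite{Wrad}. Next I would use the formulae $\pi(h,x)$, $\pim(h)$ of \cite[Theorem~1]{Wcomp} recalled in the introduction: the non-abelian minimal normal subgroups of a finite group $G$ are precisely the sets $\pi(h,G)$ for $h$ satisfying $\pim(h)$. Assuming $\R(G)=1$, every minimal normal subgroup is non-abelian, so I can say ``$G$ is generated by (equivalently, here, is the product of) its minimal normal subgroups'' by the sentence asserting that every element lies in the subgroup generated by the union of the $\pi(h,G)$; since distinct minimal normal subgroups centralise each other and the socle is their direct product, this product is a bounded-length product and the statement ``$(\forall g)(\exists h_1)\dots(\exists h_r)(\dots)$'' with a fixed $r$ will not suffice uniformly — so instead I would phrase it as: every maximal normal subgroup properly contained... no; cleanly, the condition is that the centraliser in $G$ of the union of all $\pi(h,G)$ is trivial, which forces $G$ to equal its socle once $\R(G)=1$ because $C_G(\mathrm{soc}(G))\supseteq \mathrm{soc}(G)$ fails only when... more simply, $\R(G)=1$ already gives $C_G(\mathrm{soc}(G))=1$ automatically, so no extra sentence is needed here: $\R(G)=1$ forces $G$ to embed in $\mathrm{Aut}(\mathrm{soc}(G))$, but does not force $G=\mathrm{soc}(G)$. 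Hence I do need the sentence ``$G$ is the product of its minimal normal subgroups'': I would write it as $(\forall g)\,\pi'(g)$ where $\pi'(g)$ asserts $g\in\mathrm{soc}(G)$, and $g\in\mathrm{soc}(G)$ is first-order expressible because (when $\R(G)=1$) $\mathrm{soc}(G)=\{g:\ g$ is a product of at most $d(g)$ elements each lying in some $\pi(h,G)\}$ — and I would remove the dependence of the bound on $g$ by instead characterising $\mathrm{soc}(G)$ as the set of $g$ such that $g$ centralises every element centralising all the $\pi(h,G)$, i.e. $\mathrm{soc}(G)=C_G(C_G(\mathrm{soc}(G)))$; when $\R(G)=1$ one has $C_G(\mathrm{soc}(G))=1$ so $C_G(C_G(\mathrm{soc}(G)))=G$, which again says nothing. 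The correct route is: require additionally that $G$ has a normal subgroup structure making it a product of simple groups, which is exactly the hypothesis of \cite{Wcomp}; I would therefore quote the relevant component-recognition result of \cite{Wcomp} directly to get a sentence saying ``$G$ is a direct product of its components'', and then conjoin ``$\R(G)=1$'' and ``every $\pi(h,G)$ with $\pim(h)$ is simple''.

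The last conjunct — that each non-abelian minimal normal subgroup $\pi(h,G)$ is simple rather than a direct power $S^k$ with $k\geq 2$ — is where I expect to use Felgner \cite{felgner}: relativising Felgner's sentence $\varphi_{\mathrm{simp}}$ characterising finite non-abelian simple groups to the definable subgroup $\pi(h,G)$ (for each $h$ with $\pim(h)$), i.e. replacing quantifiers $\forall z,\exists z$ by $\forall z(z\in\pi(h,G)\to\cdots)$, $\exists z(z\in\pi(h,G)\wedge\cdots)$ and interpreting the group operations, yields a sentence $(\forall h)(\pim(h)\to \varphi_{\mathrm{simp}}^{\pi(h,G)})$ asserting that every non-abelian minimal normal subgroup is simple. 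Conjoining this with ``$\R(G)=1$'' and the \cite{Wcomp} sentence recognising ``$G$ is the product of its minimal normal subgroups'' gives $K$: if $G$ satisfies $K$ then $\mathrm{soc}(G)=G$ is a direct product of non-abelian minimal normal subgroups each of which is simple, so $G$ is a direct product of non-abelian simple groups; conversely such a $G$ plainly satisfies all three conjuncts.

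The main obstacle is the middle conjunct, ``$G$ equals the product of its minimal normal subgroups'', since the number of factors is unbounded across the class and a naive bounded-length product formula will not work; the resolution is to extract from \cite{Wcomp} (which is precisely the paper analysing the socle of a finite group by first-order means, and from which Proposition~\ref{form_produ} is stated to ``follow easily'') a uniform first-order description of membership in the socle — for instance via the formula $\pim$ together with the fact that, in a finite group with trivial soluble radical, $g\in\mathrm{soc}(G)$ iff $g$ normalises no proper subgroup supplementing... — and I would simply cite the appropriate statement of \cite{Wcomp} rather than reprove it. Modulo that citation, assembling $K$ as the conjunction of the three sentences above, and verifying the two implications, is routine.
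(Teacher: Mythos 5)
The gap is exactly where you locate it: the conjunct ``$G$ equals the product of its minimal normal subgroups''. You correctly observe that a bounded-length product formula cannot work because the number of factors is unbounded across the class, but you then defer to an unspecified ``appropriate statement of \cite{Wcomp}'', and no ready-made uniform first-order description of membership in the socle is identified. Since this is the one genuinely non-routine point of the proposition, the argument is incomplete as it stands: all of your attempted reformulations of ``$g\in\mathrm{soc}(G)$'' (double centralisers, bounded products) collapse for the reasons you yourself note.

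The paper's proof circumvents the issue with a different decomposition that never needs the socle as a definable set. It characterises finite direct products of non-abelian simple groups by three conditions: (a) every quasisimple subnormal subgroup is normal, (b) every element is a commutator, (c) no non-trivial element commutes with all its conjugates. Condition (c) gives $\R(G)=1$, as in your first conjunct, so $\mathrm{soc}(G)$ is a direct product of non-abelian simple groups with trivial centraliser. Condition (a) --- expressible via \cite[Theorem 1.3]{Wcomp}, which supplies formulae $\pi(h,x)$, $\pi'_{\mathrm c}(h)$ parametrising the quasisimple subnormal subgroups --- forces $G$ to normalise each component, so $G/\mathrm{soc}(G)$ embeds in a direct product of outer automorphism groups, which is soluble by Schreier's conjecture. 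Condition (b), first-order thanks to Ore's conjecture (in a product of non-abelian simple groups every element is a single commutator), makes $G$ perfect; a perfect group with $G/\mathrm{soc}(G)$ soluble satisfies $G=\mathrm{soc}(G)$. This is the missing idea: ``$G$ equals its socle'' is encoded indirectly as perfectness-by-commutators plus normality of components, with Ore and Schreier doing the work. Condition (a) cannot be dropped: $S\wr\Alt(5)$ is perfect with trivial soluble radical yet is not a direct product of simple groups, and it is precisely your unexpressible middle conjunct that would have excluded it. Finally, your last conjunct (relativising Felgner's sentence to each minimal normal subgroup) is sound but becomes superfluous in the paper's setup, since quasisimple subnormal subgroups of a group with trivial soluble radical are already simple.
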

\begin{proof}Since this is an immediate variation on results of \cite{Wcomp}, we only sketch the proof. The basic underlying fact is that a finite group is a finite direct product of non-abelian finite simple groups if and only if it satisfies the three conditions: (a) every quasisimple subnormal subgroup is normal, (b) every element is a commutator, (c) no nontrivial element commutes with all its conjugates. The verification is easy, using Schreier's and Ore's conjectures (which are both theorems).

Clearly (b) and (c) are characterized by a single first-order sentence. By \cite[Theorem 1.3]{Wcomp}, there exist first-order formulas $\pi(h,x)$ and $\pi'_{\mathrm{c}}(h)$ such that for every finite group $G$, the quasisimple subnormal subgroups of $G$ are the subsets $\pi(h,G)$ where $h$ ranges over $\pi'_{\mathrm{c}}(G)$. Hence (a) can be characterized by the sentence \[(\forall h)(\forall x)(\forall y)\Big(\big(\pi'_{\mathrm{c}}(h)\wedge \pi(h,x)\big)\to \pi(h,yxy^{-1})\Big).\qedhere\]
\end{proof}


Let us now prove Theorem \ref{thm_power}, which asserts that, among finite groups, the direct powers (or direct squares) of non-abelian simple groups cannot be characterized by a first-order sentence.
This follows from the following result:

\begin{lem}\label{lem_nplusone}
Let $(G_n)$ be a sequence of finite groups that elementarily converges. Then $(G_p\times G_q)$ elementarily converges when $\max(p,q)$ tends to infinity. In particular, $(G_n^2)$ and $(G_n\times G_{n+1})$ are asymptotically elementarily equivalent.
\end{lem}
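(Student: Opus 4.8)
The plan is to argue by contradiction and reduce to ultraproducts, using the two ingredients flagged in the introduction: that forming ultraproducts commutes with taking direct products of two groups, and that (assuming CH, by Remark~\ref{CH_rem}) a non-principal ultraproduct of finite groups is determined up to isomorphism by its elementary theory. Write $T$ for the limit theory of $(G_n)$, i.e.\ the complete theory consisting of the sentences that hold in $G_n$ for all sufficiently large $n$. The assertion to be proved says that for every sentence $F$ there is $N$ such that whether $G_p\times G_q\models F$ is independent of $(p,q)$ as long as $p\ge N$ and $q\ge N$; taking $(p,q)=(k,k)$ and $(p,q)=(k,k+1)$ then yields the last sentence of the lemma. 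So I would suppose for a contradiction that some sentence $F$ fails this: there are then sequences $p_k,q_k\to\infty$ and $p'_k,q'_k\to\infty$ with $G_{p_k}\times G_{q_k}\models F$ and $G_{p'_k}\times G_{q'_k}\models\neg F$ for every $k$.

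Fix a non-principal ultrafilter $\CU$ on $\N$. By \L os's theorem $\prod_\CU(G_{p_k}\times G_{q_k})\models F$ whereas $\prod_\CU(G_{p'_k}\times G_{q'_k})\models\neg F$, and, since ultraproducts commute with direct products of two groups,
\[\prod_\CU(G_{p_k}\times G_{q_k})\;\cong\;\Big(\prod_\CU G_{p_k}\Big)\times\Big(\prod_\CU G_{q_k}\Big),\]
with the analogous isomorphism for the primed family. The crucial observation is that each of the four factors occurring here is a model of $T$: if $\sigma\in T$ then $G_n\models\sigma$ for all large $n$, and since $p_k\to\infty$ this makes $\{k:G_{p_k}\models\sigma\}$ cofinite, hence a member of $\CU$, so $\prod_\CU G_{p_k}\models\sigma$; as $T$ is complete, $\mathrm{Th}\big(\prod_\CU G_{p_k}\big)=T$, and the same applies to $\prod_\CU G_{q_k}$, $\prod_\CU G_{p'_k}$ and $\prod_\CU G_{q'_k}$. (If $T$ has a finite model then $T=\mathrm{Th}(G_0)$ for a single finite group $G_0$, whence $G_n\cong G_0$ for all large $n$ and the lemma is immediate; so one may assume the models of $T$ are infinite.)

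Now assume CH. By Remark~\ref{CH_rem} each of these four ultraproducts is an $\omega_1$-saturated group of cardinality $\omega_1$ with theory $T$, so any two of them are isomorphic; let $U$ denote this common group. Then
\[\prod_\CU(G_{p_k}\times G_{q_k})\;\cong\;U\times U\;\cong\;\prod_\CU(G_{p'_k}\times G_{q'_k}),\]
contradicting the fact that the left-hand side satisfies $F$ while the right-hand side satisfies $\neg F$. This proves the lemma under CH. To eliminate CH I would argue as the authors do for Theorem~\ref{thm_power}: once a sequence of finite groups is coded by a real, both the hypothesis that $(G_n)$ elementarily converges and the conclusion are arithmetic statements (satisfaction in a given finite group is decidable, and the remaining quantifiers range over natural numbers), so the lemma is a $\Pi^1_1$ — a fortiori $\Pi^1_2$ — assertion, hence absolute by Shoenfield's theorem; being provable in ZFC $+$ CH, it is provable in ZFC.

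The only step above that is not \L os-theorem bookkeeping is the passage from ``the two direct factors have the same complete theory $T$'' to ``the two product groups are elementarily equivalent''. I carry it out via isomorphism of the factors, which is precisely what forces the use of CH (through Remark~\ref{CH_rem}) and then of Shoenfield absoluteness; alternatively one can bypass CH by invoking the Feferman--Vaught theorem, which says directly that the theory of $A\times B$ is determined by the theories of $A$ and $B$. I expect this ``ultraproducts commute with products, and products of elementarily equivalent groups are elementarily equivalent'' package to carry essentially all the weight of the proof; the rest, including the observation in the second paragraph that a subsequence escaping to infinity of an elementarily convergent sequence of finite groups retains the limit theory $T$, is routine.
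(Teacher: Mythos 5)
Your argument is correct and follows essentially the same route as the paper: ultraproducts commute with finite direct products, CH together with $\omega_1$-saturation identifies the factor ultraproducts (each of which models the complete limit theory $T$), and Shoenfield absoluteness then removes CH from the statement once it is put in the form $(\forall I\subset\N)\Phi(I)$ with $\Phi$ arithmetic. Your parenthetical remark that Feferman--Vaught would let you pass directly from elementary equivalence of the factors to elementary equivalence of the products --- thereby dispensing with both CH and the absoluteness argument --- is sound and would in fact give a shorter proof, but as written your main line of reasoning coincides with the authors'.
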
  
\begin{proof}
We remark that the proof below without essential changes works for other structures than groups.

We start by proving the lemma under the continuum hypothesis (CH), which implies that all non-principal ultraproducts of the sequence $(G_n)$ are isomorphic (see Remark \ref{CH_rem}) to a single group $G$.

Let $\CU$ be an non-principal ultrafilter on $\N$, and $\CU'$ its push-forward by $n\mapsto n-1$. Then
\[\prod_\CU G_n^2\cong \prod_\CU G_n\times\prod_\CU G_n\cong G\times G,\]
while 
\[\prod_\CU G_n\times G_{n+1}\cong \prod_\CU G_n\times\prod_\CU G_{n+1}\cong \prod_\CU G_n\times\prod_{\CU'} G_n\cong G\times G.\]
Hence the conclusion follows from Proposition \ref{AEE_ultra}: the assertion of the lemma is a theorem of ZFC+CH.

The next step is to eliminate CH. We use Shoenfield's absoluteness theorem \cite{Sho}. Its content is that a sufficiently simple assertion, if true in ZFC+CH, is also true in ZF. 

Let us be more precise. Let $V$ be a model of ZF. Then inside $V$ there is submodel $L$ of ZF, called G\"odel's constructible universe, which is a model of ZFC+CH. Shoenfield's absoluteness theorem states (in particular) that any sentence of set theory of the form $(\forall I\subset\N)(\exists J\subset\N)\Psi(I,J)$, where $\Psi(I,J)$ only involves quantifiers over $\N$ and no parameters, is absolute in the sense that
it holds in $V$ if and only if it holds in $L$.  We are concerned with a property of
the simpler form $(\forall I\subset\N)\Phi(I)$, with $\Phi(I)$ parameter-free and with only quantifiers over $\N$.  From above we know that this formula holds in every model of ZFC+CH. Hence, it also holds in $L$, and thus in the original model $V$, by Shoenfield's absoluteness theorem.

Let us now see why the assertion of the theorem can be written in the prescribed form $(\forall I\subset\N)\Phi(I)$.

It is convenient first to restate the theorem as follows: start with an explicit enumeration $(H_n)$ of all finite groups (so that each finite group is isomorphic to $H_n$ for some $n$). This allows avoiding quantifiers on sequences of groups. The assertion of the theorem can be stated as follows: for every subset $I$ of $\N$ such that $(H_n)$ elementarily converges on $I$ (when $n\to\infty$), the sequence $(H_p\times H_q)$ elementarily converges when $p,q\to\infty$, $p,q\in I$. Letting $\mathcal{F}$ be the (countable) set of first-order sentences of group theory we can write this as
\begin{align*}
(\forall I\subset\N)&
 \Big[\Big((\forall F\in\mathcal{F})(\exists n\in\N)(\forall p,q\in\N)\big(p,q\in I_{\geq n} \to \big(H_p\models F\leftrightarrow H_q\models F)\big)\Big)\\
 \to &\Big( (\forall F\in\mathcal{F})(\exists n\in\N)(\forall p,p',q,q'\in\N)\\
 &\qquad\big(p,p',q,q'\in I_{\geq n} \to \big(H_p\times H_q\models F\leftrightarrow H_{p'}\times H_{q'}\models F)\big)\Big)\Big].
\end{align*}
Here ``$p\in I_n$" is shorthand for $(p\in I \wedge p\geq n)$. This has the required form $(\forall I\subset\N)\Phi(I)$, with $\Phi(I)$ parameter-free and having only quantifiers over $\N$. Hence, Shoenfield's absoluteness theorem applies.
\end{proof}


\begin{proof}[Proof of Theorem \ref{thm_power}]
Start from any infinite set of isomorphism classes of non-abelian finite simple groups. From it, extract a subsequence $(S_n)$ that elementarily converges, with $|S_n|<|S_{n+1}|$ for all $n$. By Lemma \ref{lem_nplusone}, 
the sequences $(S_n^2)$ and $(S_n\times S_{n+1})$ are AEE.  The result now follows
 by Proposition \ref{AEE_car}.
\end{proof}

\begin{rem}
Theorem \ref{thm_power} is based on the existence of an elementarily convergent sequence of non-abelian finite simple groups of order tending to infinity, whose existence follows from a compactness argument. While it seems hard to characterize such sequences fully, one can exhibit some of them, relying on the work of Ax \cite{Ax} on pseudofinite fields. For instance, his results imply that for each given prime $p$ and integer $d\geq 2$, the sequence $\big(\mathrm{PSL}_d(\mathbb{F}_{p^{n!}})\big)_{n\geq 1}$ is elementarily convergent.
\end{rem}


\section{Definable supplements: Theorem \ref{thm_defina}}
To prove Theorem \ref{thm_defina} it will suffice to prove the following result.  

\begin{prop}\label{frattini_prop}  Let $G$ be a pseudofinite group and $K_1$ a definable normal subgroup not contained in $\R(G)$.  Then $K_1$ has a proper definable supplement in $G$.

More precisely, let $\varphi(y_1,\dots,y_n,x)$ be a first-order formula and $\underline a=(a_1,\dots,a_n)$ an $n$-tuple of elements with $K=\varphi(\underline a,G)$ a normal subgroup of $G$.  
Then there exist a first-order formula $$\chi(y_1,\dots, y_n, w_1,w_2,z,x),$$ depending only on $\varphi$, and elements $h_1,h_2, k$ of $G$, such that $\chi(\underline a, h_1,h_2,k, G)$ is a proper supplement to $K_1$ in $G$.
\end{prop}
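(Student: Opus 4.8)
The plan is to reduce to a statement about finite groups, prove it by working modulo the soluble radical, and transfer back to $G$ using pseudofiniteness. The reduction step: it suffices to produce a formula $\chi$, depending only on $\varphi$, such that for every \emph{finite} group $G'$ and every tuple $\underline a'$ for which $\varphi(\underline a',G')$ is a non-soluble normal subgroup $K'$, there are elements $h_1,h_2,k$ of $G'$ with $\chi(\underline a',h_1,h_2,k,G')$ a proper supplement to $K'$. Granting this, relativising $\sigma_{56}$ to $\varphi(\underline y,\cdot)$ and quantifying over the parameters yields a single first-order sentence asserting: for all $\underline y$, if $\varphi(\underline y,\cdot)$ defines a normal subgroup not satisfying $\sigma_{56}$, then some $\chi(\underline y,w_1,w_2,z,\cdot)$ defines a proper supplement to it. By the finite statement this sentence holds in every finite group, hence in the pseudofinite $G$; and it applies to $\underline a$, because $K_1$ is a definable normal subgroup and, since $K_1\not\subseteq\R(G)$ while $\R(G)$ contains every pseudosoluble definable normal subgroup, $K_1$ is not pseudo-(finite soluble), hence does not satisfy $\sigma_{56}$. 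The supplement the sentence provides is then the one required.

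To prove the finite statement, let $G'$ be finite and $K'=\varphi(\underline a',G')$ a non-soluble normal subgroup. Pass to $\bar G'=G'/\R(G')$, writing bars for images: the defining data of $\bar K'=K'\R(G')/\R(G')$ is obtained uniformly from $\varphi$ and $\rho$, one has $\bar K'\neq 1$ (as $K'$ is not soluble), and $\R(\bar G')=1$. As a non-trivial normal subgroup, $\bar K'$ contains a minimal normal subgroup $\bar N'$ of $\bar G'$, necessarily non-abelian because $\R(\bar G')=1$; by \cite{Wcomp} there is $\bar h$ with $\pim(\bar h)$ and $\bar N'=\pi(\bar h,\bar G')$. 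A proper supplement to $\bar N'$ in $\bar G'$ is a proper supplement to $\bar K'$, and its preimage in $G'$ is a proper supplement to $K'$; so the task reduces to supplementing the non-abelian minimal normal subgroup $\bar N'=T_1\times\cdots\times T_r$, with the $T_i$ isomorphic non-abelian simple groups permuted transitively by $\bar G'$.

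Supplementing $\bar N'$ is a Frattini-type phenomenon, driven by the non-nilpotence of $\bar N'$, and it uses the following input about finite simple groups: every finite non-abelian simple group has a non-trivial conjugacy class fixed setwise by its full automorphism group. Picking $t\in T_1\setminus\{1\}$ in such a class and transporting it into the remaining factors along the transitive action — so $\bar k$ is the element of $\bar N'$ whose $i$-th coordinate is $t^{g_i}$, for $g_i\in\bar G'$ with $T_1^{g_i}=T_i$ — one obtains $\bar k\in\bar N'\setminus\{1\}$ whose $\bar G'$-conjugacy class coincides with its $\bar N'$-conjugacy class: a permutation of the factors fixes $\bar k$ up to re-coordinatisation, and the residual automorphisms of $T_1$ keep $t$ inside its class. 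Hence $\C_{\bar G'}(\bar k)\,\bar N'=\bar G'$, while $\C_{\bar G'}(\bar k)\neq\bar G'$ since $Z(\bar G')\cap\bar N'\leq Z(\bar N')=1$ forces $\bar k\notin Z(\bar G')$; thus $\C_{\bar G'}(\bar k)$ is a proper supplement to $\bar N'$. Its preimage in $G'$ is $\{x\in G':\rho(xkx^{-1}k^{-1})\}$ for any lift $k$ of $\bar k$, since $\R(G')=\{y:\rho(y)\}$; so one may take $\chi(y_1,\dots,y_n,w_1,w_2,z,x)$ to be simply $\rho(xzx^{-1}z^{-1})$, feeding $k$ as the parameter $z$ (with $h_1,h_2$ unused, $\bar h$ serving only to locate $\bar k$), and this depends only on $\rho$, a fortiori only on $\varphi$.

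I expect the main obstacle to be precisely the supplement inside $\bar N'$: the structural reductions, the uniformity bookkeeping, the passage to $\bar G'$ and the pseudofiniteness transfer are all routine, but one needs the ingredient about finite simple groups invoked above — a non-trivial automorphism-invariant conjugacy class, or, in a variant approach, a uniformly definable ``Sylow-like'' non-normal subgroup whose normalizer supplements $\bar N'$ by a Frattini argument — and establishing such a fact rests on the classification of finite simple groups.
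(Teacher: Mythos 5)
Your overall architecture coincides with the paper's: reduce to a uniform statement about finite groups (whenever $\varphi(\underline y,\cdot)$ defines a non-soluble normal subgroup of a finite group, some instance of a fixed formula defines a proper supplement), package that statement as a single first-order sentence, and transfer it to $G$ by pseudofiniteness. Your reductions --- working modulo the soluble radical via $\rho$, locating a non-abelian minimal normal subgroup $\bar N'=T_1\times\dots\times T_r$ inside $\bar K'$, pulling a supplement of $\bar N'$ back to one of $K'$, and noting that $\{x:\rho([x,z])\}$ is a subgroup in every finite, hence every pseudofinite, group --- are all sound and parallel the paper's. Where you genuinely diverge is the mechanism for supplementing $\bar N'$: you take the centralizer of a single element $\bar k$ whose $\bar G'$-class equals its $\bar N'$-class, and your orbit--stabilizer computation for such a $\bar k$ is correct. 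The paper instead applies the classical Frattini argument to a Sylow subgroup of $\bar N'$: by \cite[Theorem 4.9]{KLST} every finite simple group has a non-trivial \emph{cyclic} Sylow subgroup, and (Lemma \ref{lem_frattini}) the normalizer of the centralizer $D$ of such a Sylow subgroup is a proper supplement; $D$ is made uniformly definable as the centralizer of $s=s_1\cdots s_r$ \emph{inside the kernel $T$ of the permutation action on the factors}, and defining $T$ is what the two extra parameters $d_1,d_2$ (a generating pair of $\bar S_1$ with trivial centralizer) are for. Your version, if completed, would yield a cleaner formula with a single parameter.

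The one genuine gap is exactly the simple-group input you flag: the assertion that every finite non-abelian simple group has a non-trivial conjugacy class invariant under its full automorphism group is given without proof or reference, and it is not a formal consequence of anything you set up. Brauer's permutation lemma only shows that each \emph{individual} automorphism fixes a non-trivial class (once a second invariant irreducible character is exhibited), which does not produce a class fixed by all automorphisms simultaneously when $\mathrm{Out}(S)$ is non-cyclic (e.g.\ $A_6$, $\mathrm{PSL}_3(4)$, $\mathrm{P}\Omega_8^+(q)$ with triality). The statement does appear to be true, but establishing it is a CFSG-based case analysis (unique involution classes, long-root elements, a symmetric involution class in $\mathrm{Sp}_4(2^n)$ and $F_4(2^n)$ where the graph automorphism swaps long and short root elements, and so on), and you would need to carry this out or locate a citation. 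The paper's choice of CFSG input --- a non-trivial cyclic Sylow subgroup, which is a quotable theorem --- is precisely what lets it sidestep this, at the cost of the normalizer-of-centralizer formula and two more parameters. A minor, easily repaired point: your use of relativised $\sigma_{56}$ to express non-solubility of $K'$ works, but the paper's hypothesis $(\exists x)(\varphi(\underline y,x)\wedge\neg\rho(x))$ is more direct and applies immediately to $K_1\not\subseteq\R(G)$.
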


First we establish most of the necessary group-theoretic statements:

\begin{lem}\label{lem_frattini}
Let $\bar{F}$ be a finite group, $\bar{L}=\prod_{i=1}^r\bar{S}_i$ a non-abelian minimal normal subgroup, with $\bar{S}_i$ simple. Let $\bar{T}$ be the kernel of the action of $\bar{F}$ by conjugation on the set $\{\bar{S}_i\mid 1\leq i\leq r\}$ of normal subgroups of $\bar{L}$. Let $\bar{P}=\prod_i\bar{P}_i$ be a Sylow subgroup of $\bar{L}$, with $\bar{P}_i=\bar{P}\cap\bar{S}_i$. Let $\bar{D}$ be the centralizer of $\bar{P}$ in $\bar{F}$. Then
\begin{enumerate}[label={\rm (\alph*)}]
\item\label{item_fratcen} $\bar{F}=\bar{L}\NN_{\bar{F}}(\bar{D})$;
\item\label{item_notn} if $\bar{P}\neq\{1\}$ then $\NN_{\bar{F}}(\bar{D})$ is a proper subgroup of $\bar{F}$;
\item\label{item_Dcentr} if $\bar{P}_i=\langle \bar{s}_i\rangle$ is non-trivial cyclic and $\bar{s}=\bar{s}_1\dots \bar{s}_r$, then $\bar{D}$ is the centralizer of $\bar{s}$ in $\bar{T}$;
\item\label{item_d1d2} there exist $\bar{d}_1,\bar{d}_2\in\bar{F}$ $($in fact in $\bar{S}_1)$ such that $\bar{T}=\bigcap_{g\in\bar{F}}g^{-1}Q(\bar{d}_1,\bar{d}_2)g$, where $Q(\bar{d}_1,\bar{d}_2)=\bar{F}\smallsetminus\bigcap_{1\leq i,j\leq 2}\{x\mid [\bar{d}_i,\bar{d}_j^x]=1\}$.
\end{enumerate}
\end{lem}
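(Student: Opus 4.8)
My plan is to prove the four items in turn; throughout I use the standard structure of a non-abelian minimal normal subgroup, so that the $\bar{S}_i$ are isomorphic non-abelian simple groups, they are the minimal normal subgroups of $\bar{L}$, and $\bar{F}$ permutes them by conjugation. Items (a) and (b) are applications of the Frattini argument together with minimality of $\bar{L}$. For (a): since $\bar{P}$ is a Sylow subgroup of $\bar{L}\triangleleft\bar{F}$, the Frattini argument gives $\bar{F}=\bar{L}\,\NN_{\bar{F}}(\bar{P})$; and any element normalising $\bar{P}$ normalises $\bar{D}=\C_{\bar{F}}(\bar{P})$, so $\NN_{\bar{F}}(\bar{P})\leq\NN_{\bar{F}}(\bar{D})$ and hence $\bar{F}=\bar{L}\,\NN_{\bar{F}}(\bar{D})$. (If $\bar{P}=\{1\}$ then $\bar{D}=\bar{F}$ and there is nothing to prove.) For (b): assume $\bar{D}\triangleleft\bar{F}$. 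Then $\bar{D}\cap\bar{L}=\C_{\bar{L}}(\bar{P})$ is a normal subgroup of $\bar{F}$ lying in $\bar{L}$; it is nontrivial because it contains $Z(\bar{P})\neq\{1\}$ (here $\bar{P}\neq\{1\}$), so by minimality of $\bar{L}$ it equals $\bar{L}$. Then $\bar{L}$ centralises $\bar{P}$, whence $\bar{P}\leq Z(\bar{L})=\{1\}$ (as $\bar{L}$ is a product of non-abelian simple groups), contradicting $\bar{P}\neq\{1\}$. Thus $\bar{D}$ is not normal and $\NN_{\bar{F}}(\bar{D})<\bar{F}$.

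For (c) I would first check that $\bar{D}\leq\bar{T}$: if some $g\in\C_{\bar{F}}(\bar{P})$ carried $\bar{S}_i$ to a different factor $\bar{S}_j$, it would carry the nontrivial subgroup $\bar{P}_i\leq\bar{S}_i$ into $\bar{S}_j$ while fixing it pointwise, forcing $\bar{P}_i\leq\bar{S}_i\cap\bar{S}_j=\{1\}$, a contradiction; hence $g$ normalises every $\bar{S}_i$. It then remains to see that within $\bar{T}$, centralising $\bar{s}$ coincides with centralising $\bar{P}$: for $g\in\bar{T}$ conjugation fixes each $\bar{S}_i$ setwise, so in the direct decomposition $\bar{s}^g=\bar{s}_1^g\cdots\bar{s}_r^g$ with $\bar{s}_i^g\in\bar{S}_i$, and $\bar{s}^g=\bar{s}$ if and only if $\bar{s}_i^g=\bar{s}_i$ for all $i$, if and only if $g$ centralises $\bar{P}=\langle\bar{s}_1,\dots,\bar{s}_r\rangle$. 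Hence $\bar{D}=\C_{\bar{F}}(\bar{P})\cap\bar{T}=\C_{\bar{T}}(\bar{P})=\C_{\bar{T}}(\bar{s})$.

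For (d) I would first note that $\bar{F}$ acts transitively on $\{\bar{S}_1,\dots,\bar{S}_r\}$: for any orbit, the product of its factors is a nontrivial normal subgroup of $\bar{F}$ contained in $\bar{L}$, hence equals $\bar{L}$ by minimality, which forces the orbit to be everything. Using that every non-abelian finite simple group is generated by two elements, I would pick $\bar{d}_1,\bar{d}_2\in\bar{S}_1$ with $\langle\bar{d}_1,\bar{d}_2\rangle=\bar{S}_1$. Unwinding the definition, $x\notin Q(\bar{d}_1,\bar{d}_2)$ means exactly that $\bar{S}_1=\langle\bar{d}_1,\bar{d}_2\rangle$ commutes elementwise with its conjugate $\langle\bar{d}_1,\bar{d}_2\rangle^x$, so $y\notin\bigcap_g g^{-1}Q(\bar{d}_1,\bar{d}_2)g$ means exactly that for some $g$ the conjugate $\langle\bar{d}_1,\bar{d}_2\rangle^{gyg^{-1}}$ (which is again one of the factors) commutes elementwise with $\bar{S}_1$. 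If $y\in\bar{T}$, every conjugate $gyg^{-1}$ normalises $\bar{S}_1$, so $\langle\bar{d}_1,\bar{d}_2\rangle^{gyg^{-1}}=\bar{S}_1$, which does not commute elementwise with $\bar{S}_1$ since $\bar{S}_1$ is non-abelian; hence $y\in\bigcap_g g^{-1}Q(\bar{d}_1,\bar{d}_2)g$. Conversely, if $y\notin\bar{T}$ then $y$ carries some $\bar{S}_i$ to a different factor, and by transitivity a suitable conjugate $gyg^{-1}$ carries $\bar{S}_1$ to a different factor $\bar{S}_k$; since distinct direct factors commute elementwise, $\langle\bar{d}_1,\bar{d}_2\rangle^{gyg^{-1}}=\bar{S}_k$ commutes elementwise with $\bar{S}_1$, so $y\notin\bigcap_g g^{-1}Q(\bar{d}_1,\bar{d}_2)g$. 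This gives $\bar{T}=\bigcap_g g^{-1}Q(\bar{d}_1,\bar{d}_2)g$ with $\bar{d}_1,\bar{d}_2\in\bar{S}_1$.

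The step I expect to be the main obstacle is (d): one must track carefully how conjugation permutes the factors $\bar{S}_i$ and verify both inclusions identifying $\bar{T}$ with the intersection of conjugates of $Q(\bar{d}_1,\bar{d}_2)$. The only external ingredient is that $\bar{S}_1$ is $2$-generated, a consequence of the classification of finite simple groups which the paper may invoke freely, as it already relies on other consequences of the classification.
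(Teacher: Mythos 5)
Your proof is correct and follows essentially the same route as the paper's: the Frattini argument for (a), and the same direct unwinding for (c) and (d) using that the $\bar{S}_i$ are permuted by conjugation and that $\bar{S}_1$ is $2$-generated (you also make explicit the transitivity of the permutation action, which the paper uses tacitly). The only minor divergence is in (b), where you argue that $\bar{D}\cap\bar{L}$ would be a non-trivial normal subgroup of $\bar{F}$ forcing $\bar{P}\leq Z(\bar{L})=\{1\}$ by minimality, whereas the paper observes that $\bar{D}\cap\bar{S}_1$ is a proper non-trivial subgroup of the simple group $\bar{S}_1$; both arguments are sound.
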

\begin{proof}
By the Frattini argument, we have $\bar{F}=\bar{L}\NN_{\bar{F}}(\bar{P})$. Since $\NN_{\bar{F}}(\bar{P})\leq \NN_{\bar{F}}(\bar{D})$, we deduce \ref{item_fratcen}.

Clearly if $\bar{P}\neq\{1\}$ then $\bar{D}\cap\bar{S}_1$ is the normalizer in $\bar{S}_1$ of the centralizer of $\bar{P}_1$ in $\bar{S}_1$, and is not normal. So $\bar{D}$ is not normal, proving \ref{item_notn}.

\ref{item_Dcentr} That $\bar{P}\neq\{1\}$ ensures that the centralizer $\bar{D}$ of $\bar{P}$ is contained in $\bar{T}$, hence contained in $C_{\bar{T}}(\bar{s})$. Conversely if $x\in\bar{T}$ centralizes $\bar{s}$ then since its action by conjugation preserves the product decomposition of $\bar{L}$, the element $x$ centralizes each $\bar{s}_i$, hence $x$ centralizes $\bar{P}$.

\ref{item_d1d2}  Every finite simple group can be generated by two elements, and hence, if non-abelian, has a pair with trivial centralizer. Choose in $\bar{S}_1$ such a pair $(\bar{d}_1,\bar{d}_2)$. Then the normalizer of $\bar{S}_1$ in $\bar{F}$ is $\big\{x\in\bar{F}\mid[\bar{S}_1,x\bar{S}_1x^{-1}]\neq\{1\}\big\}$; this is precisely $Q(\bar{d}_1,\bar{d}_2)$. Hence the intersection of its conjugates is $\bar{T}$.
\end{proof}

\begin{proof}[Proof of Proposition \ref{frattini_prop}]
We write $\underline y=(y_1,\dots,y_n)$. Let $\psi(\underline y)$ be the first-order formula
\[\varphi(\underline y , 1)\wedge(\forall u\forall x_1\forall x_2)\Big(\big(\varphi(\underline y ,x_1)\wedge\varphi (\underline y ,x_2)\big)\to \varphi(\underline y ,ux_1x_2^{-1}u^{-1})\Big).\]
This is just the statement that $\varphi(\underline y,G)$ is a normal subgroup. Write $\psi'(\underline y)$ for the sentence $\psi(\underline y)\wedge(\exists x)(\varphi(\underline y,x)\wedge \neg\rho(x))$. For pseudofinite groups, this says that the definable subset $\varphi(\underline y,G)$ is a normal subgroup that is not 
pseudo-(finite soluble).

Now let $F$ be a finite group, $\underline b=(b_1,\dots,b_n)$ an $n$-tuple with $\psi'(\underline b)$ holding in $F$
and $K=\varphi(\underline b,F)$; thus $K\triangleleft F$ and $K\not\leq \R(F)$. Define $M=K\cap\R(F)$, and let $L$ be a normal subgroup, minimal among those normal subgroups of $F$ contained in $K$ and properly containing $M$.
Thus, $L/M$ is a non-abelian minimal normal subgroup of $F/M$ and so a direct product of isomorphic non-abelian simple groups $\overline S_1,\dots, \overline S_r$.  From \cite[Theorem 4.9]{KLST} every
finite simple group has a non-trivial cyclic Sylow subgroup, and so we can choose a prime $p$ for which each $\bar{S}_i$ has a non-trivial cyclic Sylow $p$-subgroup,
generated by $Ms_i$ say, with $s_i$ in $L$. So $M\langle s_1,\dots,s_r\rangle/M$ is a $p$-Sylow subgroup of $L$; let $D$ be the inverse image in $F$ of its centralizer in $F/M$. By Lemma \ref{lem_frattini}, \ref{item_fratcen} and \ref{item_notn}, $\NN_F(D)$ is a proper supplement to $L$, and hence to $K$.

Write $s=s_1\dots s_r$ and let $T$ be the kernel of the conjugation action of $F$ on the set $\{\overline S_i\mid 1\leq i\leq r\}$ of subgroups of $F/M$. By Lemma \ref{lem_frattini}\ref{item_Dcentr}, $D/M$ is the centralizer of $Ms$ in $T/M$; that is, $D=\{x\in F\mid\rho([s,x])\}\cap T$.

By Lemma \ref{lem_frattini}\ref{item_d1d2}, there exist $d_1,d_2\in F$ (in the inverse image of $\bar{S}_1)$ such that $T=\theta_1(d_1,d_2,F)$, where, $\equiv$ denoting equality of formulae:
\[\theta_0(w_1,w_2,x)\equiv\bigvee_{1\leq i,j\leq 2}\!\!\!\neg \rho([w_i,w_j^x]),\quad \theta_1(w_1,w_2,x)\equiv(\forall g)\big(\theta_0(w_1,w_2,gxg^{-1})\big).\]
 
 Hence $D$ equals $\theta(d_1,d_2,s,G)$ where $\theta(w_1,w_2,z,x)\equiv\theta_1(w_1,w_2,x)\wedge \rho([z,x])$.  Therefore $\NN_F(D)=\chi(d_1,d_2,s,G)$ where 
 $$\chi(w_1,w_2,z,x)
\equiv(\forall y)\big(\theta (w_1,w_2,z,y)\to\theta(w_1,w_s,z,y^x)\big);$$
 this a subset of elements that in every finite group is guaranteed to be a subgroup for any choice of the triple of parameters, and hence also in every pseudofinite group. The assertion that $\chi(d_1,d_2,s,G)$ is a proper supplement to $\varphi(\underline y,G)$ can be encapsulated in the first-order sentence $\xi(w_1,w_2,x,\underline y)$:
\[(\exists t)(\neg\chi(w_1,w_2,x,t))\wedge (\forall u)(\exists v_1\exists v_2)\big(\chi(w_1,w_2,x,u)\wedge\varphi(\underline y,u)\big).\]
 
 Thus our finite group $F$ together with its $n$-tuple $\underline b$ satisfies $(\exists w_1\exists w_2\exists x)(\xi(w_1,w_2,x,\underline b))$. Hence, we have shown that every finite group $F$ satisfies the first-order sentence 
 \[(\forall\underline y)\Big(\psi'(\underline y)\to\Big((\exists w_1\exists w_2\exists x)\big(\xi(w_1,w_2,x,\underline y)\big) \Big)\Big).\]
 Therefore, this sentence holds for the pseudofinite group $G$. Since $\psi'(\underline a)$ holds in $G$, we deduce that there exist elements $d_1,d_2,s$ in $G$ such that $\xi(d_1,d_2,s,\underline a)$ holds in $G$. So $\chi(d_1,d_2,s,G)$ is a proper supplement to $\varphi(\underline a,G)$, and hence to $K$.  
\end{proof}

\end{document}